\documentclass{amsart}

\usepackage[dvips]{epsfig}
\usepackage{graphicx}
\usepackage{latexsym}
\usepackage{amsmath}
\usepackage{amsthm}
\usepackage{amssymb}
\usepackage{stmaryrd}
\RequirePackage[colorlinks=true,citecolor=blue,urlcolor=blue]{hyperref}

\usepackage{mathtools}

\usepackage{multirow}

 \usepackage[applemac]{inputenc}

\setlength{\oddsidemargin}{.5cm} 
\setlength{\evensidemargin}{.5cm}
\setlength{\textwidth}{15cm} 
\setlength{\textheight}{20cm}
\setlength{\topmargin}{1cm}

\newtheorem{thm}{Theorem}
\newtheorem{assumption}[thm]{Assumption}
\newtheorem{lem}[thm]{Lemma}

\newtheorem{defi}[thm]{Definition}

\newtheorem{prop}[thm]{Proposition}
\newtheorem{rk}[thm]{Remark}


\newcommand{\p}{\partial}


\newcommand{\vip}{\vskip.2cm}

\newcommand{\field}[1]{\mathbb{#1}}
\newcommand{\EE}{\field{E}}

\newcommand{\PP}{\field{P}}

\newcommand{\RR}{\field{R}}
\newcommand{\TT}{\field{T}}

\newcommand{\Aa}{{\mathcal A}}

\newcommand{\Cc}{{\mathcal C}}

\newcommand{\Gg}{{\mathcal G}}

\newcommand{\Pp}{{\mathcal P}}

\newcommand{\Ss}{{\mathcal S}}
\newcommand{\Tt}{{\mathcal T}}

\newcommand{\Vv}{{\mathcal V}}

\newcommand{\Xx}{{\mathcal X}}

\def \ep {\varepsilon}

\def \a {\alpha}
\def \b {\beta}

\begin{document}

\title[Influence of variability on the Malthus parameter]{How does variability in cell aging and growth rates influence the Malthus parameter?}

\author{Ad\'ela\"ide Olivier}

\address{Ad\'ela\"ide Olivier, Universit\'e Paris-Dauphine, PSL Research University, CNRS, UMR [7534], CEREMADE, 75016 Paris, France.}

\email{adelaide.olivier@ceremade.dauphine.fr}

\begin{abstract}
The aim of this study is to compare the growth speed of different cell populations measured by their Malthus parameter.
We focus on both the age-structured and size-structured equations.
A first population (of reference) is composed of cells all aging or growing at the same rate $\bar v$.
A second population (with variability) is composed of cells each aging or growing at a rate $v$ drawn according to a non-degenerated distribution $\rho$ with mean $\bar v$.
In a first part, analytical answers -- based on the study of an eigenproblem -- are provided for the age-structured model.
In a second part, numerical answers -- based on stochastic simulations -- are derived for the size-structured model.
It appears numerically that the population with variability proliferates more slowly than the population of reference (for experimentally plausible division rates).
The decrease in the Malthus parameter we measure, around $2\%$ for distributions $\rho$ with realistic coefficients of variations around 15-20\%, is determinant since it controls the {\it exponential} growth of the whole population.
\end{abstract}

\maketitle

\textbf{Keywords}: structured populations, age-structured equation, size-structured equation, eigenproblem, Malthus parameter, cell division, piecewise-deterministic Markov process, continuous-time tree.

\textbf{Mathematics Subject Classification (2010)}: 35Q92, 47A75, 60J80, 92D25.







\section{Introduction}

Recent biological studies draw attention to the question of variability between cells. We refer to the study of Kiviet {\it et al.} published in 2014 \cite{4_Kiviet}.
A cell in a controlled culture grows at a constant rate $v>0$, but this rate can differ from one individual to another.
The biological question we address here states as follows. How does individual variability in the growth rate influence the growth speed of the population? 
The growth speed of the population is measured by the Malthus parameter we define thereafter, also called in the literature {\it fitness}.
Even if the variability in the growth rate among cells is small, with a distribution of coefficient of variation around 10\%,
and even if its influence on the Malthus parameter would be still smaller, such an influence may become determinant
since it characterises the exponential growth speed of the population.

\subsection{Deterministic modeling}

\subsubsection{Paradigmatic age or size equations}
Structured models have been successfully used to describe the evolution of a population of cells over the past decades, we refer to Metz and Diekmann \cite{4_MD}, the textbook of Perthame \cite{4_Perthame} and references therein. We focus in this study on classical structuring variables which are age (understood in a broad sense as a physiological age -- it may not be the time elapsed since birth) or size. 
The concentration $n(t,a,x)$ of cells of physiological age $a\geq 0$ and size $x>0$ at time $t\geq 0$ satisfies
\begin{equation} \label{4_eq:PDEdeltanovar}
\left\{
\begin{array}{l}
 \tfrac{\partial}{\partial t}n(t,a,x) + \frac{\partial}{\partial x} \big(g_x(a,x) n(t,a,x)\big) + \tfrac{\partial}{\partial a} \big(g_a(a,x) n(t,a,x)\big) + \gamma(x,a)n(t,a,x) = 0, \\ \\
g_a(a=0,x)n(t, a = 0, x) = 4 \int_0^\infty \gamma(a,2x)n(t,a,2x) da, \\ \\
g_x(a,x=0) n(t,a, x= 0) = 0, \quad n(t = 0, a, x) = n^{{\rm in}}(a,x),
\end{array}
\right.
\end{equation}
in a weak sense.
The mechanism at work here can be described as a mass balance. The concentration of cells $n(t,a,x)$ evolves through two terms of transport which stand for the growth and the aging, and with one term of fragmentation:
\begin{itemize}
\item[-] {\it Transport terms.} Both size and age evolve in a deterministic way, the growth speed and aging speed being respectively $g_x$ and $g_a$. Let $x_t$ and $a_t$ be the size and physiological age at time $t$ of a cell born with characteristics $(0,x_b) = (a_{t=0},x_{t=0})$. Then their evolution is given by $dx_t/dt = g_x(a_t, x_t)$ and $da_t/dt = g_a(a_t, x_t)$. 
\item[-] {\it Fragmentation terms.} We assume that the division is perfectly symmetric: an individual of size $x$ divides into two individuals of size $x/2$ (and age $0$). A cell of current size $x$ and current physiological age $a$ divides with probability $\gamma(a,x) dt$ between $t$ and $t+dt$. The boundary condition for $a=0$ ensures that the quantity of new cells of characteristics $(0,x)$ is exactly twice the number of cells of characteristics $(s,2x)$ for $s>0$ that have just divided. The factor $4$ is thus the product of a factor $2$ arising from the birth of two new cells at each division and another factor $2$ coming from the $2x$ size of dividing cells.\\
\end{itemize}

\noindent  Equation \ref{4_eq:PDEdeltanovar} also encloses two paradigmatic equations.
{\bf 1)} If $g_x$ and $\gamma$ do not depend on size, integrating \eqref{4_eq:PDEdeltanovar} in $x$ from zero to infinity,  we obtain the {\it age-structured equation}. The age-structured equation is a classical equation and we refer to Rubinov \cite{rubinov} and to Perthame \cite{4_Perthame} (Section 3.9.1) for a complete study. 
{\bf 2)} If $g_x$ and $\gamma$ do not depend on age, integrating \eqref{4_eq:PDEdeltanovar} in $a$ from zero to infinity,  we obtain the {\it size-structured equation}, as introduced by Metz and Diekmann \cite{4_MD}. We refer to Mischler and Scher \cite{4_MischlerScher} and to references therein for the study of this equation.

\subsubsection{Introducing individual variability}
To take into account variability between cells, we extend the previous framework adding an individual feature -- an aging rate or a growth rate -- as structuring variable.
Let $\Vv$ be a compact set of $(0,\infty)$ where the individual feature takes its values and
let  $\rho(v,dv')$ be a Markov kernel with support in $\Vv\times \Vv$ ({\it i.e.} satisfying $\int_\Vv \rho(v,dv')=1$ for all $v\in \Vv$).
In this study we focus on two cases of special interest: \\

\noindent {\bf Model (A+V).}  The age-structured model with variability, extending the classical setting (see Section~\ref{4_sec:age}).
The individual feature $v$ we introduce represents here an aging rate.
The concentration $n(t,a,v)$ of cells of physiological age $a\geq 0$ and aging rate $v\in \Vv$ at time $t\geq 0$ evolves as
\begin{equation*} \label{4_eq:edp_agevar}
\left\{
\begin{array}{l}
\frac{\partial}{\partial t} n(t,a,v) + \frac{\partial}{\partial a}\big(g_a(a,v) n(t,a,v)\big) + \gamma(a,v)n(t,a,v) = 0,  \\ \\  
g_a(a=0,v') n(t,a=0, v') = 2 \iint_{\Ss} \gamma(a,v) n(t,a,v)  \rho(v,dv') dv da, \quad \quad 
n(t=0,a,v)=  n^{{\rm in}}(a,v),
\end{array}
\right.
\end{equation*}
 in a weak sense, abusing slightly notation here,  $\Ss$ being the state space $[0,\infty)\times \Vv$. \\

\noindent {\bf Model (S+V).} The size-structured model with variability, as already introduced in Doumic, Hoffmann, Krell and Robert \cite{4_DHKR1} (see Section~\ref{4_sec:size}).
The concentration $n(t,x,v)$ of cells of size $x \geq 0$ and growth rate $v\in \Vv$ at time $t\geq 0$ evolves as
\begin{equation*} \label{4_eq:edp_sizevar}
\left\{
\begin{array}{l}
\frac{\partial}{\partial t} n(t,x,v) + \frac{\partial}{\partial x}\big(g_x(x,v) n(t,x,v)\big) + \gamma(x,v)n(t,x,v) = 4 \int_\Vv \gamma(2x,v') n(t,2x,v') \rho(v',v) dv', \\ \\
g_x(x=0,v') n(t,x=0, v') = 0, \quad \quad
n(t=0,x,v)=  n^{{\rm in}}(x,v),
\end{array}
\right.
\end{equation*}
still in a weak sense.\\

The underlying mechanism of the two previous equations is similar to the one described previously for~\eqref{4_eq:PDEdeltanovar}:
\begin{itemize}
\item[-] {\it Transport terms.}  The individual feature $v$ does not evolve through time, it is given once and for all at birth. Only the physiological age or the size evolve deterministically through the transport terms in $g_a$ or $g_x$. 
\item[-] {\it Fragmentation terms.} A cell of current physiological age $a$ or current size $x$, and feature~$v$, divides with probability $\gamma(a,v)dt$ or $\gamma(x,v)dt$ between $t$ and $t+dt$. It gives birth to two new cells, each of one having age $0$ or size $x/2$, and feature $v'$ with probability $\rho(v,dv')$.\\
\end{itemize}

  We stress again that the age-structured and size-structured equations were extensively studied but mainly without variability.
Here the novelty is the structuring variable $v$ we add to take into account an individual feature. \\

\subsection{Objective and related studies}

\subsubsection{Mathematical formulation of our problem}
The initial biological question can now be reformulated mathematically. As we will see, the Malthus parameter is defined as the dominant eigenvalue of  {\bf Model (A+V)} or {\bf (S+V)}. We denote it by $\lambda_{\gamma,\rho}$ to stress the dependence not only on the division rate $\gamma$ but also on the variability Markov kernel $\rho(v,v')dv'$. The long-time behaviour of the solution to {\bf Model (A+V)} or {\bf (S+V)} is expected to be $n(t,a,v) \approx e^{\lambda_{\gamma,\rho} t} N_{\gamma,\rho}(a,v)$ or $n(t,x,v) \approx e^{\lambda_{\gamma,\rho} t} N_{\gamma,\rho}(x,v)$, with $N_{\gamma,\rho}$ a stationary profile, where we see that the growth speed of the system is governed by $\lambda_{\gamma,\rho}$. 
Our aim is to compare the growth speed of the two following populations:\\ \label{page:pop}

\noindent {\bf 1)} {\bf Population of reference (without variability).}  All cells age or grow at the same rate~$\bar v$. This population grows at speed\footnote{We denote $\lambda_{\gamma,\rho}$ by $\lambda_{\gamma,\bar v}$ when $\rho$ is the Dirac mass at point $\bar v$.} $\lambda_{\gamma, \bar v}$. \\

\noindent {\bf 2)} {\bf Population with variability.} Each cell has its own aging or growth rate drawn according to a non-degenerated density $\rho(v')dv'$ (if one neglects heredity in the transmission of the individual feature). This population grows at speed $\lambda_{\gamma, \rho}$.\\

\noindent In other words, our aim is to compare $\lambda_{\gamma,\rho}$ to $\lambda_{\gamma,\bar v}$. Before all, one has to link $\rho$ and $\bar v$ in a biologically pertinent way, and this point is crucial.
To define properly an average aging or growth rate $\bar v$ can be done by several ways. Our choice in this article is to set $\bar v$ as the average growth rate of newborn cells, that is to say
\begin{equation} \label{eq:choixvbar}
\bar v = \frac{\int_{\mathcal V} v  \big( \iint_{[0,\infty)\times\mathcal V} \rho(v',v) \gamma(y,v') N_{\gamma,\rho}(y,v') dydv'\big) dv}{\iint_{[0,\infty)\times\mathcal V} \gamma(y,v) N_{\gamma,\rho}(y,v) dydv} = \int_{\mathcal V} v\rho(v) dv
\end{equation}
as soon as we neglect heredity in the transmission of variability.
I stress that other choices could be done to define $\bar v$: the average rate over any cell of the population or the average growth rate of dividing cells for instance\footnote{Respectively given by $\iint v N_{\gamma,\rho}(y,v)dydv$ and $\iint v\gamma(y,v) N_{\gamma,\rho}(y,v)dydv \big/ \iint \gamma(y,v) N_{\gamma,\rho}(y,v)dydv$.}. 
Another possibility would be to preserve the mean lifetime\footnote{One can check this is equivalent -- in both models -- to preserve the harmonic average of the density  $\rho$. So one should set $\frac{1}{\bar v} = \int_{\mathcal V} \frac{1}{v}\rho(v) dv$.}.
The study of all these possibilities, as interesting it is, lies beyond the aim of the paper and I decide to focus mainly on the choice (\ref{eq:choixvbar}) for a first study. 
Indeed the choice (\ref{eq:choixvbar}) is motivated by the fact that  it somehow enables us to preserve some aging or growing potential at birth between the two populations we compare.

\subsubsection{Existing studies on the fitness}

Before going ahead let us describe existing studies on the variations of the Malthus parameter.
We sum up below two studies dealing with the size-structured model (without variability).
We also mention a corpus of articles by Clairambault, Gaubert, Lepoutre, Michel and Perthame (one can see \cite{4_PerronFloquet2,4_GaubertLepoutre} and references therein, and also \cite{4_TLepoutre}) comparing the eigenvalue of a partial differential equation with time dependent periodic coefficients (birth and death rates)  to the eigenvalue of the equation with time-averaged coefficients. They mainly focus on the age-structured system for the cell division cycle. Evolution equations with time periodic coefficients require specific techniques (Floquet's theory).\\

\paragraph{{\it Asymmetry of the division.}}

The influence of asymmetry in the division is investigated by Michel in \cite{4_Michel06} for the model described by
\begin{equation*} \label{4_eq:size_pde_asym}
\tfrac{\partial}{\partial t} n(t,x) + \tfrac{\partial}{\partial x}n(t,x) + B(x)n(t,x)  = \tfrac{1}{\sigma}B(\tfrac{x}{\sigma})n(t,\tfrac{x}{\sigma}) + \tfrac{1}{1-\sigma}B\big(\tfrac{x}{1-\sigma}\big)n\big(t,\tfrac{x}{1-\sigma}\big),
\end{equation*}
with $n(t,x=0)=0$, where the size of each cell evolves linearly (at speed $g_x \equiv 1$) and a cell of size $x$ divides at a rate $B(x)$ into two cells of sizes $\sigma x$ and $(1-\sigma)x$, for some parameter $\sigma\in(0,1)$. 
The case of reference is the symmetric division case corresponding to $\sigma = 1/2$.
If we denote by $\lambda_{B,\sigma}$ the Malthus parameter in this model, the aim is to compare $\lambda_{B,\sigma}$ to $\lambda_{B,1/2}$. Depending on the form of the division rate $B$, the asymmetry division is beneficial or not for the growth of the overall cell population. 
Two special cases are highlighted in \cite{4_Michel06}, 
{\bf 1)} qualitatively, if cells divide at high sizes (the support of $B$ is far away from zero), then $\lambda_{B,\sigma} < \lambda_{B,1/2}$ which means that asymmetry slows the growth speed of the population,  {\bf 2)} qualitatively, if cells divides early (the support of $B$ contains zero and $B$ decreases), then $\lambda_{B,\sigma} >  \lambda_{B,1/2}$ which means that asymmetry creates a gain speeding up the growth of the population. 
We refer to Theorems~2.1 and~2.2 of \cite{4_Michel06} for precise statements of the assumptions on $B$. 
The method developed in \cite{4_Michel06} is an interesting approach to investigate our question as regards variability.\\

\paragraph{{\it Influence of the growth rate.}}
The influence of the individual growth rate is investigated by Calvez {\it et al.} in \cite{4_CDG12} for the model described by
\begin{equation*} \label{4_eq:size_pde}
\tfrac{\partial}{\partial t} n(t,x) + \tfrac{\partial}{\partial x}\big(\bar v x n(t,x)\big) + B(x)n(t,x) = 4 B(2x) n(t,2x),
\end{equation*}
where the size of each cell evolves exponentially at rate $\bar v$ and assuming that division is symmetric for simplicity.
On this very simple model, the Malthus parameter is exactly equal to the common individual growth rate $\bar v$, hence it \emph{seems} that the faster the cells grow, the faster the overall cell population grows. This is \emph{not} the case in general: if the growth speed would be $g(x)$ instead of $x,$ then increasing $g$ by a factor may have the effect of diminishing the Malthus parameter. 
A plausible example: if around infinity $g(x)$ is equivalent to $x^{\nu}$ (up to a constant) with $\nu<1$ and if $B(x)$ vanishes at infinity, being equivalent to $x^{-\gamma}$ (up to a constant) with $0< \gamma < 1-\nu$, then the Malthus parameter vanishes when inflating $g$ by a multiplicative factor.
This was proved in \cite{4_CDG12} (see Theorem~1 and Proposition~1 in Appendix~2, see also Figure~2(a)).
Note that in the present study, we inflate the growth speed by a factor that depends on each individual.

To finish I mention a recent study by Campillo, Champagnat and Fritsch \cite{Coralie}: for growth-fragmentation-death models, they focus on the variations of the first eigenvalue with respect to a parameter involved in both the growth speed and the birth and death rates, with a nice mixing of deterministic and stochastic techniques.

\subsection{Main results and outline}
 Our main results are summed up in Table~\ref{4_tab:resultats}. Our analysis begins with {\bf Model} {\bf (A+V)} in  Section~\ref{4_sec:age}, neglecting heredity in the transmission of the aging rate by picking $\rho(v,dv') = \rho(v')dv'$. We analytically prove that the Malthus parameter, well-defined by Theorem~\ref{4_thm:ageeig_general} (we give a proof for the sake of completeness in Section \ref{sec:proofvp}), can increase or decrease when introducing variability in the aging rate, depending on the form of the division rate $\gamma$ 
 (Theorem~\ref{4_thm:ageinfluence}). We also compute the perturbation at order two of the Malthus parameter $\lambda_{\gamma,\rho_\a}$ when $\rho_\a$ converges in distribution to a Dirac mass as $\a \rightarrow 0$ (Theorem~\ref{4_thm:ageperturbation}).

The paradigmatic age-structured model still being a toy model for cellular division (at least for {\it E. coli}, see Robert {\it et al.} \cite{4_DHKR2}), we turn to {\bf Model} {\bf (S+V)} in Section~\ref{4_sec:size}. A numerical study, based on stochastic simulations, is carried out to give preliminary answers. For the division rate $\gamma$ chosen as a power law with some lag (see Table~\ref{4_tab:resultats}), for $\rho(\cdot)$ a truncated Gaussian distribution with mean $\bar v$, we infer that 
$\lambda_{\gamma,\rho} < \lambda_{\gamma,\bar v}$. Admittedly this conclusion agrees with biological wisdom but the strength of our methodology is to quantify such a decrease. 
We evaluate the magnitude of the decrease around 2\% when the variability distribution has a realistic coefficient of variation around 15\%-20\%. Such a decrease is far from being negligible since the Malthus parameter governs the {\it exponential} growth of the whole population. In addition, we observe a monotonous relationship: the Malthus parameter decreases when there is more and more variability in the growth rate.

{\small
\begin{table}[h!]
\centering
{\small
\begin{tabular}{cccc}
\hline \hline
{\bf }                                        					& {\bf Model} {\bf (A+V)}                                         	& {\bf Model} {\bf (S+V)} 							\\
Division rate $\gamma$                        			& $v(a-1)^2 {\bf 1}_{\{a\geq 1\}}$            		&       $vx (x-1)^2 {\bf 1}_{\{x\geq 1\}}$           	                	\\
Variability                              					& {\small In the aging rate}                                	& {\small In the growth rate} 						\\ \\
\multirow{2}{*}{Variations} 					& $\lambda_{\gamma,\rho} \searrow$            	& $\lambda_{\gamma,\rho} \searrow$                  		\\ 
                                               					& (Figure~\ref{4_fig:AgeVar_Bpuissance})    	& (Figure~\ref{4_fig:SizeVar_lambdaalpha}) 			\\  \\
\multirow{3}{*}{{\it Comments}} 					& {\it Analytic result:} 					& \multirow{2}{*}{--} 								\\
                                                   					& {\it Theorems~\ref{4_thm:ageinfluence} and~\ref{4_thm:ageperturbation}}  					& 	\\ \hline \hline
\end{tabular}
\caption{{\it Variations of the Malthus parameter compared to the reference value when introducing variability between cells, for an experimentally realistic division rate. Note: $\lambda_{\gamma,\rho} \searrow$ means that $\lambda_{\gamma,\rho} < \lambda_{\gamma,\bar v}$ for a non-degenerated probability distribution $\rho(\cdot)$ with mean $\bar v$ (truncated Gaussian), and so on.
} \label{4_tab:resultats}}
}
\end{table}
}
We provide perspectives in Section~\ref{4_sec:discussion}.
 Section~\ref{4_sec:proofs} is devoted to the proofs of our main results of Section~\ref{4_sec:age}. Section~\ref{4_sec:appendix} gives supplementary figures and tables to complete the numerical study of Section~\ref{4_sec:size}. Finally Sections \ref{sec:proofvp} and \ref{sec:supp} are two appendices.

\section{The age-structured model with variability} \label{4_sec:age}

 In this section we study the age-structured model with variability {\bf (A+V)}. 
Lebowitz and Rubinow \cite{4_age2} and Rotenberg \cite{4_age1} already enriched the age-structured  model by an additional feature. 
In \cite{4_age2}, the model is structured by the age and the lifetime, called generation time, which is inherited from the mother cell to the daughter cells. The death or disappearance of the cells occur at a constant rate. 
In \cite{4_age1}, the model is structured by a maturity (our physiological age) and by a maturity velocity (our aging rate). 
Rotenberg's model is further studied by Mischler, Perthame and Ryzhik \cite{4_age3}, establishing the existence of a steady state and the long-time behaviour of the solution. 

Rotenberg's model is close to the model we are interested in, but different. 
From a certain viewpoint, the model introduced by Rotenberg \cite{4_age1} and studied in \cite{4_age3} is more general than {\bf Model (A+V)}: there are distinct death and birth rates and maturity velocity can change at any moment during the life of an individual. However,  in our {\bf Model (A+V)}  we allow for a general evolution of the aging rate (through the aging speed $g_a$) and, most importantly, we allow for heredity in the transmission of the aging rate (through the Markov kernel $\rho$) in Section \ref{subsec:agedefMalthus} which follows.

\subsection{Definition of the Malthus parameter} \label{subsec:agedefMalthus}

\subsubsection{Main assumptions}

 We use the notation $\Ss = [0,\infty)\times\Vv$ for the state space of the physiological age and the aging rate.
We require the following two assumptions throughout this section.
\begin{assumption}[Aging speed $g_a$ and division rate $\gamma$] \label{4_ass:age_basic}
The following conditions are fulfilled:
\begin{itemize}
\item[{\bf (a)}] Both $(a,v)\leadsto \gamma(a,v)$ and $(a,v) \leadsto g_a(a,v)$ are uniformly continuous.
\item[{\bf (b)}]  For any $(a,v)\in \Ss$, $g_a(a,v) > 0$ and  $\sup_{(a,v)\in\Ss} g_a(a,v) < \infty$.
\item[{\bf (c)}] For any $v\in \Vv$ there exits $0\leq a_{\min}(v) < a_{\max}(v) \leq \infty$ such that $\gamma(a,v)>0$ for $a\in[a_{\min}(v),a_{\max}(v)]$. 
\item[{\bf (d)}] For any $v \in \Vv$, we have $\int^{\infty} \tfrac{\gamma(a,v)}{g_a(a,v)} da = \infty$ and
$
\sup_{(a,v)\in \Ss} \tfrac{\gamma(a,v)}{g_a(a,v)} e^{ - \int_0^a \frac{\gamma(s,v)}{g_a(s,v)} ds } < \infty.
$
\end{itemize}
\end{assumption}
\begin{assumption}[Markov kernel $\rho$] \label{4_ass:age_rho} There exists a continuous and bounded $\rho : \Vv^2 \rightarrow [0,\infty)$ satisfying $\int_\Vv \rho(v,v')dv'=1$ for any $v\in\Vv$, such that $\rho(v,dv') = \rho(v,v') dv'$, which satisfies in addition
$$
\inf_{v'\in \Vv}\int_\Vv \rho(v,v') dv = \varrho > \tfrac{1}{2}.
$$
\end{assumption}

These two assumptions enable us to define the Malthus parameter studying the direct and adjoint eigenproblems below.
In Assumption~\ref{4_ass:age_basic}, we require {\bf (a)} since we look for continuous eigenvectors.
The requirement {\bf (b)} on the aging speed $g_a$ authorises the case $g_a(a,v) = v$ we are interested in (Section \ref{4_sec:age_influence}). Note that the non-negativity of $g_a$ means that cells can only age, but not rejuvenate. The boundedness of the aging speed is technical.
The condition {\bf (c)} is needed in our proof for the uniqueness of the eigenelements.
Finally note that {\bf (d)} is not very restrictive and it can be read as follows: for any $v\in \Vv$, $a \leadsto \tfrac{\gamma(a,v)}{g_a(a,v)}$ is a hazard rate and the associated density is bounded in $a$ (and in $v$, which belongs to a compact set of $(0,\infty)$).
Assumption~\ref{4_ass:age_rho} on $\rho$ is rather strong but it simplifies the study of the eigenproblem. 

These assumptions have the advantage of leading to a simple proof (in Section \ref{4_sec:proofs}). The recent results of Mischler and Scher \cite{4_MischlerScher} should enable us to weaken it. We stress that our objective here is not the extensive study of the eigenproblem, but lies beyond with the study of the variations of the eigenvalue.

\subsubsection{The direct and adjoint eigenproblems}

We now precisely define the Malthus parameter.
To that end, let us introduce the direct eigenproblem,
\begin{equation} \label{4_eq:EDP_N}
\left\{
\begin{array}{l}
 \frac{\partial}{\partial a} \big(g_a(a,v) N_{\gamma,\rho}(a,v) \big) + \gamma(a,v) N_{\gamma,\rho}(a,v) = - \lambda_{\gamma,\rho} N_{\gamma,\rho}(a,v), \\ \\ 
g_a(a= 0, v') N_{\gamma,\rho}(a = 0, v') = 2 \iint_{\Ss} \gamma(a,v) N_{\gamma,\rho}(a,v) \rho(v,v')  dv da, \\ \\
N_{\gamma,\rho} \geq 0, \quad \iint_\Ss N_{\gamma,\rho}(a , v) dvda = 1,
\end{array}
\right.
\end{equation}
and the adjoint eigenproblem,
\begin{equation} \label{4_eq:EDP_Psi}
\left\{
\begin{array}{l}
g_a(a,v) \tfrac{\partial }{\partial a}\phi_{\gamma,\rho}(a,v) + \gamma(a,v) \Big( 2 \int_{\Vv} \phi_{\gamma,\rho}(0,v')  \rho(v,v') dv' - \phi_{\gamma,\rho}(a,v) \Big) = \lambda_{\gamma,\rho} \phi_{\gamma,\rho}(a,v),  \\ \\ 
\phi_{\gamma,\rho} \geq 0, \quad \iint_\Ss (N_{\gamma,\rho} \phi_{\gamma,\rho})(a,v) dvda = 1,
\end{array}
\right.
\end{equation}
linked to {\bf Model (A+V)}.\\

Let $\Cc_b\big(\Vv)$  be the set of functions $f : \Vv \rightarrow \RR$ which are bounded and continuous and let $\Cc^1_b\big(\RR^+)$ be the set of functions $f : \RR^+\rightarrow \RR$  which are bounded and continuously differentiable. For $f : \Ss \rightarrow \RR$ the notation $f\in \Cc^1_b\big(\RR^+; \Cc_b(\Vv)\big)$ means that $a \leadsto f(a,v)$ belongs to $\Cc^1_b\big(\RR^+)$ for any $v\in \Vv$ and that $v \leadsto f(a,v)$ belongs to $\Cc_b\big(\Vv)$ for any $a\in\RR^+$.

\begin{thm} \label{4_thm:ageeig_general}
Work under Assumptions~\ref{4_ass:age_basic} and~\ref{4_ass:age_rho}.
There exists a unique solution $(\lambda_{\gamma,\rho},N_{\gamma,\rho},\phi_{\gamma,\rho})$ to the direct and adjoint eigenproblems \eqref{4_eq:EDP_N} and \eqref{4_eq:EDP_Psi} such that $\lambda_{\gamma,\rho} > 0$, $(g_a N_{\gamma,\rho}) \in \Cc^1_b\big(\RR^+; \Cc_b(\Vv)\big)$ and $\phi_{\gamma,\rho} \in \Cc^1_b\big(\RR^+; \Cc_b(\Vv)\big)$.
\end{thm}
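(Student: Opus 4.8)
The strategy is to solve the direct eigenproblem \eqref{4_eq:EDP_N} explicitly along characteristics (in the age variable $a$), reducing the existence/uniqueness of $(\lambda_{\gamma,\rho}, N_{\gamma,\rho})$ to a fixed-point problem for a positive integral operator on $\Cc_b(\Vv)$, and then to invoke a Krein--Rutman / Perron--Frobenius type argument on that operator. First I would fix $\lambda > 0$ as a parameter and integrate the first line of \eqref{4_eq:EDP_N}: writing $m(a,v) = g_a(a,v) N_{\gamma,\rho}(a,v)$, the ODE $\partial_a m + \big(\tfrac{\gamma}{g_a} + \tfrac{\lambda}{g_a}\big) m = 0$ gives
\[
m(a,v) = m(0,v)\,\exp\!\Big(-\!\int_0^a \tfrac{\gamma(s,v)+\lambda}{g_a(s,v)}\,ds\Big),
\]
so $N_{\gamma,\rho}$ is determined by its boundary trace $m(0,\cdot)$ together with $\lambda$; Assumption~\ref{4_ass:age_basic}{\bf (b)}--{\bf (d)} ensure this profile is well-defined, nonnegative, integrable in $a$ (the $e^{-\int \gamma/g_a}$ factor decays because $\int^\infty \gamma/g_a = \infty$, and the extra $e^{-\lambda\int 1/g_a}$ only helps since $g_a$ is bounded), and that $m \in \Cc^1_b$ in $a$, continuous in $v$. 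Plugging this into the renewal boundary condition (second line of \eqref{4_eq:EDP_N}) yields $m(0,v') = 2\int_\Vv \mathcal{K}_\lambda(v,v')\, m(0,v)\, dv$ where
\[
\mathcal{K}_\lambda(v,v') = \rho(v,v')\int_0^\infty \tfrac{\gamma(a,v)}{g_a(a,v)}\exp\!\Big(-\!\int_0^a \tfrac{\gamma(s,v)+\lambda}{g_a(s,v)}\,ds\Big)\,da.
\]
So the eigenvalue equation becomes: find $\lambda>0$ such that the positive kernel operator $T_\lambda$ on $\Cc_b(\Vv)$ with kernel $2\mathcal{K}_\lambda(v,v')$ (acting by integration in the first variable) has eigenvalue $1$, with a positive eigenfunction.

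The next step is the spectral analysis of $T_\lambda$. By Assumption~\ref{4_ass:age_basic} the kernel $\mathcal{K}_\lambda$ is continuous and bounded on $\Vv^2$ (using {\bf (d)} for the bound, uniformly in $\lambda \ge 0$), hence $T_\lambda$ is a compact (indeed, since $\Vv$ is compact, a positive kernel operator with continuous kernel) operator on $\Cc_b(\Vv)$. Assumption~\ref{4_ass:age_rho} makes the kernel strictly positive in the sense needed for irreducibility: $\rho(v,v')$ integrates to the strictly positive lower bound $\varrho > 1/2$ over $v$, and $\gamma(a,v)>0$ on a nontrivial age interval by {\bf (c)}, so $\mathcal{K}_\lambda(v,v') > 0$ wherever $\rho(v,v') > 0$, and the $\varrho$-bound prevents the support from degenerating. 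Krein--Rutman then gives a simple dominant eigenvalue $\mu(\lambda) > 0$ of $T_\lambda$ with a positive continuous eigenfunction $m_\lambda(0,\cdot)$, unique up to scaling. One checks $\mu(\lambda)$ is continuous and strictly decreasing in $\lambda$ (the kernel $\mathcal{K}_\lambda$ is pointwise strictly decreasing in $\lambda$ since the exponent increases, so $T_\lambda$ decreases and the dominant eigenvalue decreases, e.g.\ by the variational/Collatz--Wielandt characterization), that $\mu(\lambda) \to 0$ as $\lambda \to \infty$ (dominated convergence, the integrand vanishes pointwise), and that $\mu(0) > 1$: at $\lambda = 0$ one has $\int_0^\infty \tfrac{\gamma}{g_a}\exp(-\int_0^a \tfrac{\gamma}{g_a}) da = 1$ exactly (it is the integral of a probability density by the remark after {\bf (d)}), so $\mathcal{K}_0(v,v') = \rho(v,v')$ times that, i.e.\ $T_0$ has kernel $2\rho(v,v')$, and since $\int_\Vv \rho(v,v')\,dv \ge \varrho > 1/2$ one gets $\mu(0) \ge 2\varrho > 1$. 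By the intermediate value theorem there is a unique $\lambda_{\gamma,\rho} > 0$ with $\mu(\lambda_{\gamma,\rho}) = 1$; normalizing $\iint_\Ss N_{\gamma,\rho} = 1$ fixes the eigenfunction uniquely. Uniqueness of the triple $\lambda$ with a \emph{nonnegative} (not a priori positive) solution uses irreducibility to upgrade nonnegativity to strict positivity, then simplicity of the Krein--Rutman eigenvalue, then strict monotonicity of $\mu(\lambda)$.

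For the adjoint problem \eqref{4_eq:EDP_Psi}: with $\lambda_{\gamma,\rho}$ now fixed, the equation for $\phi_{\gamma,\rho}$ is again a linear ODE in $a$ (first order, since $g_a \partial_a \phi - (\gamma+\lambda)\phi = -2\gamma \int_\Vv \phi(0,v')\rho(v,v')dv'$ has a source term that is constant in $a$ for fixed $v$), whose bounded solution on $[0,\infty)$ is
\[
\phi_{\gamma,\rho}(a,v) = 2\Big(\!\int_\Vv \phi_{\gamma,\rho}(0,v')\rho(v,v')dv'\Big)\int_a^\infty \tfrac{\gamma(b,v)}{g_a(b,v)}\exp\!\Big(-\!\int_a^b \tfrac{\gamma(s,v)+\lambda_{\gamma,\rho}}{g_a(s,v)}\,ds\Big)\,db;
\]
one must check this is the \emph{unique} bounded solution (the homogeneous solutions $\exp(\int_0^a (\gamma+\lambda)/g_a)$ blow up, so boundedness selects this one), it is nonnegative and in $\Cc^1_b$, and then evaluating at $a=0$ produces a fixed-point equation for $\phi_{\gamma,\rho}(0,\cdot)$ involving exactly the adjoint kernel $\mathcal{K}_{\lambda_{\gamma,\rho}}^*$ of $T_{\lambda_{\gamma,\rho}}$; since $\mu(\lambda_{\gamma,\rho}) = 1$ is the dominant eigenvalue of $T^*$ as well with positive eigenfunction, this has a unique positive solution up to scaling, fixed by the normalization $\iint_\Ss N_{\gamma,\rho}\phi_{\gamma,\rho} = 1$.

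The main obstacle I expect is the \emph{uniqueness} half, specifically showing that any eigenvalue $\lambda$ admitting a merely nonnegative eigenfunction must coincide with $\lambda_{\gamma,\rho}$: one needs the strong-positivity/irreducibility consequences of Assumption~\ref{4_ass:age_rho} to run a Perron--Frobenius comparison argument, and care is needed because the natural space here is $\Cc_b(\Vv)$ (not an $L^p$), so the compactness and positive-operator theory must be invoked in the right Banach-lattice setting; the condition $\varrho > 1/2$ is exactly what is being used, via $\mu(0) \ge 2\varrho > 1$, to locate the root at a \emph{positive} $\lambda$. The integrability and regularity bookkeeping along characteristics (verifying $g_a N_{\gamma,\rho} \in \Cc^1_b(\RR^+;\Cc_b(\Vv))$ and similarly for $\phi$) is routine given Assumption~\ref{4_ass:age_basic}, so I would treat it briskly.
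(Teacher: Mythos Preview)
Your overall strategy---integrate along characteristics to reduce to a boundary trace equation, then study the one-parameter family of compact positive operators $T_\lambda$ on $\Cc_b(\Vv)$ via Krein--Rutman and the monotonicity of the dominant eigenvalue $\mu(\lambda)$---is exactly the paper's route. The difference is a missing step: the paper does not apply Krein--Rutman to $T_\lambda$ directly, because Assumption~\ref{4_ass:age_rho} does \emph{not} guarantee that $\rho(v,v')>0$ on all of $\Vv^2$ (the lower bound $\inf_{v'}\int_\Vv \rho(v,v')\,dv \ge \varrho$ is perfectly compatible with $\rho$ vanishing on large regions), so neither strict positivity nor irreducibility of $T_\lambda$ follows. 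Your sentence ``the $\varrho$-bound prevents the support from degenerating'' is precisely the gap. The paper's fix is to regularize, setting $\rho_\ep(v,v') = \rho(v,v') + \ep/|\Vv|$, which makes the perturbed operator $\Gg_{\lambda,\ep}$ strictly positive; strong Krein--Rutman then yields a simple $\mu_{\lambda,\ep}>0$ with a positive eigenfunction, one solves $\mu_{\lambda_\ep,\ep}=1$ for a unique $\lambda_\ep$, and finally extracts a limit as $\ep\to 0$ by Ascoli--Arzel\`a compactness.

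Two further comparisons. First, for the direct problem one actually gets $\mu(0)=2$ (not merely $\ge 2\varrho$): integrate in $v'$ and use $\int_\Vv \rho(v,v')\,dv'=1$; the hypothesis $\varrho>\tfrac12$ is really needed for the \emph{adjoint} operator, where the relevant integration is in the other variable. Second, your adjoint argument (pass to $T^*_{\lambda_{\gamma,\rho}}$ and inherit eigenvalue $1$ by duality) is more economical than the paper's, which reruns the four-step programme for the adjoint operator separately. On uniqueness, however, you lean on simplicity of the Krein--Rutman eigenvalue, which you have not established for the unregularized operator; the paper instead proves $\lambda=\lambda^*$ by pairing the direct and adjoint equations, then shows $N_1=N_2$ via a sign argument on $|N_1-N_2|$ tested against $\phi$ (using Assumption~\ref{4_ass:age_basic}{\bf (c)} and the strict positivity of $\phi$), and finally invokes the Fredholm alternative for the adjoint eigenvector. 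Either route works once the positivity issue is settled, but as written your uniqueness inherits the same gap.
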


The unique $\lambda_{\gamma,\rho}$ defined in such a way is what we call the Malthus parameter (or {\it fitness}) and let us now study its variations (with respect to $\rho$, for $\gamma$ fixed).

\subsection{Influence of variability on the Malthus parameter} \label{4_sec:age_influence}
A preliminary remark first: let us consider the case of a constant division rate, $\gamma(a,v) = c > 0$ for any $(a,v) \in \Ss$. Since $\gamma$ is constant, we expect that variability in the aging rate has no influence on the Malthus parameter. When $\rho(v,dv') = \rho(v')dv'$, one can easily check that $\lambda_{\gamma,\rho} = c$ which is independent of $\rho$.

\subsubsection{Model specifications}
From now on we consider {\bf Model (A+V)} with the aging speed set to
\begin{equation} \label{4_eq:gv}
g_a(a,v) = v
\end{equation}
which means that the physiological age is proportional to the time elapsed since birth, up to a factor $v$ which may change from an individual to another. \\

Let $0< a_{\max} \leq \infty$.
For a continuous $B: [0,a_{\max}) \rightarrow [0,\infty)$ such that $\int^{a_{\max}} B(s)ds = \infty$,
we assume that
\begin{equation} \label{4_eq:vBa}
\gamma(a,v) = g_a(a,v)B(a) = v B(a).
\end{equation}
Cells divides at a rate 
$$\gamma(a,v)  dt = g_a(a,v) B(a) dt = B(a) da,$$ 
since $da = g_a(a,v) dt$.
Thus one has to see the division rate $B(a)$ as a rate per unit of physiological age and $\gamma(a,v) = g_a(a,v)B(a)$  as a rate per unit  of time. We mimic here the choice made by S. Taheri-Araghi {\it et al.} \cite{4_VergassolaJun} in a more general model -- choice relying on biological evidence.\\

\subsubsection{Variations}
In this section, contrarily to the previous one, we neglect heredity in the transmission of the aging rate assuming that
\begin{equation} \label{4_eq:noheredity}
\rho(v,dv') = \rho(v') dv'
\end{equation}
for some continuous and bounded $\rho : \Vv \rightarrow [0,\infty)$ such that $\int_\Vv \rho(v')dv' = 1$.
In this framework\footnote{Note that under Specifications  \eqref{4_eq:gv} and \eqref{4_eq:vBa} the previous Assumption \ref{4_ass:age_basic} is valid.}, the eigenvectors solution to the eigenproblem \eqref{4_eq:EDP_N}--\eqref{4_eq:EDP_Psi} are explicit and an implicit relation uniquely defines the eigenvalue (see Lemma~\ref{4_lem:ageeigen} below, Section~\ref{4_sec:proofs}).\\

From now on we denote $\lambda_{\gamma,\rho}$ by $\lambda_{B,\rho}$ (to stress Specification \eqref{4_eq:vBa}) and $\lambda_{B,\bar v}$ stands for $\lambda_{B,\delta_{\bar v}}$ where $\delta_{\bar v}$ is the Dirac mass at point 
\begin{equation} \label{4_eq:meanbarv}
\bar v = \int_{\Vv} v \rho(v) dv.
\end{equation}
 If necessary, set $B(a) = 0$ for $a\geq a_{\max}$, and define
\begin{equation} \label{4_eq:fB}
\Psi_B(a) = B(a) \exp(-\int_0^a B(s) ds), \quad a\geq 0.
\end{equation} 
\vip

One wants to compare \\

\noindent {\bf 1)} the Malthus parameters $\lambda_{B,\bar v}$ solution to
\begin{equation} \label{4_eq:age_lambda_novar}
2 \int_0^\infty \exp \Big( - \frac{\lambda_{B,\bar v}a }{\bar v} \Big) \Psi_B(a) da = 1,
\end{equation}
controlling the growth speed of the {\bf population of reference} ; 
to\\

\noindent {\bf 2)} the Malthus parameters $\lambda_{B,\rho}$ solution to
\begin{equation} \label{4_eq:age_lambda}
2 \iint_{\Ss} \exp \Big( - \frac{\lambda_{B,\rho} a }{v} \Big) \Psi_B(a) \rho(v) dvda = 1,
\end{equation}
controlling the growth speed of the {\bf population with variability}. \\

\begin{thm} \label{4_thm:ageinfluence}
Consider {\bf Model (A+V)} with Specifications \eqref{4_eq:gv}, \eqref{4_eq:vBa}, \eqref{4_eq:noheredity} and \eqref{4_eq:meanbarv}.
Assuming in addition that $B : [0,a_{\max}) \rightarrow [0,\infty)$ is differentiable,
\begin{itemize}
\item[(i)] for $a_{\max}\in(0,\infty]$, if $B'(a) < B(a)^2$ for any $a \in [0,a_{\max})$ then $\lambda_{B,\rho} < \lambda_{B,\bar v}$. 
\item[(ii)] for $a_{\max}< \infty$, if $B$ is such that $\Psi_B$ is bounded and $B'(a) > B(a)^2$ for any $a\in [0,a_{\max})$ then $\lambda_{B,\rho} > \lambda_{B,\bar v}$.
\end{itemize}
\end{thm}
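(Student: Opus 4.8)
The plan is to analyze the defining relations \eqref{4_eq:age_lambda} and \eqref{4_eq:age_lambda_novar} via a convexity/concavity (Jensen-type) argument applied to the map $v \mapsto \exp(-\lambda a/v)$, weighted by the probability density $\rho$. First I would record the monotonicity structure: the left-hand side of \eqref{4_eq:age_lambda}, viewed as a function of the scalar $\lambda$, is strictly decreasing (since each $\exp(-\lambda a/v)$ is decreasing in $\lambda$ for $a > 0$, and $f_B \geq 0$ is not a.e.\ zero because $\int^{a_{\max}} B = \infty$ forces $B$ to be positive on a set of positive measure), tends to $2\int_0^\infty f_B\,da = 2 > 1$ as $\lambda \downarrow 0$, and to $0$ as $\lambda \to \infty$. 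Hence $\lambda_{B,\rho}$ is the unique positive root, and likewise for $\lambda_{B,\bar v}$; moreover to compare the two roots it suffices to compare, at a \emph{single} well-chosen value of $\lambda$, the two left-hand sides: if the ``variability'' left-hand side evaluated at $\lambda_{B,\bar v}$ is $< 1$, then by monotonicity $\lambda_{B,\rho} < \lambda_{B,\bar v}$, and symmetrically for the reverse inequality.

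Next I would reduce everything to a fixed inner integral. For fixed $\lambda > 0$ write
\begin{equation*}
G(v) = 2\int_0^\infty \exp\!\Big(-\frac{\lambda a}{v}\Big) f_B(a)\,da,
\end{equation*}
so that \eqref{4_eq:age_lambda} reads $\int_\Vv G(v)\,\rho(v)\,dv = 1$ and \eqref{4_eq:age_lambda_novar} reads $G(\bar v) = 1$ with $\lambda = \lambda_{B,\bar v}$. Then the comparison of the two Malthus parameters hinges entirely on the sign of $\int_\Vv G(v)\rho(v)\,dv - G(\bar v) = \int_\Vv \big(G(v) - G(\bar v)\big)\rho(v)\,dv$, which, since $\int_\Vv v\,\rho(v)\,dv = \bar v$ and $\rho$ is non-degenerate, has the sign of the ``second-order curvature'' of $G$: it is $> 0$ if $G$ is strictly convex on $\Vv$ and $< 0$ if $G$ is strictly concave on $\Vv$ (by Jensen's inequality, with strictness coming from non-degeneracy of $\rho$). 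So the theorem will follow once I show: $B'(a) < B(a)^2$ for all $a$ implies $G$ is concave (giving $\int G\rho < G(\bar v) = 1$, hence case (i)), and $B'(a) > B(a)^2$ implies $G$ is convex (case (ii), where the extra hypothesis that $f_B$ is bounded together with $a_{\max} < \infty$ guarantees the relevant integrals converge and justifies differentiating under the integral sign).

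The technical heart is therefore the computation of $G''(v)$ and the identification of the sign condition. Differentiating twice in $v$ under the integral (legitimate because $a e^{-\lambda a/v}$ and $a^2 e^{-\lambda a/v}$ are dominated on $\Vv$, using that $\Vv$ is a compact subset of $(0,\infty)$ and $f_B$ is integrable, resp.\ bounded with compact support in case (ii)), one gets an expression of the form $G''(v) = 2\int_0^\infty e^{-\lambda a/v}\big(\tfrac{\lambda^2 a^2}{v^4} - \tfrac{2\lambda a}{v^3}\big) f_B(a)\,da$. The cleaner route, which I expect the author also uses, is to change variables $a \mapsto a/v$ or, better, to integrate by parts to transfer derivatives from the exponential onto $f_B$: note $\partial_a\, e^{-\lambda a/v} = -\tfrac{\lambda}{v} e^{-\lambda a/v}$, so repeated integration by parts rewrites $G(v)$ in terms of $\int_0^\infty e^{-\lambda a/v} F(a)\,da$ for antiderivatives $F$ of $f_B$, turning the convexity question into a statement about $f_B' = B' e^{-\int_0^a B} - B^2 e^{-\int_0^a B} = (B' - B^2)\,e^{-\int_0^a B}$ — and here the sign condition $B'(a) \lessgtr B(a)^2$ appears exactly as the sign of $f_B'$. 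Thus the real content is: $f_B$ decreasing $\Leftrightarrow B' < B^2$, and the sign of $f_B'$ controls the sign of the second-order Jensen gap for $G$; the boundary terms from the integrations by parts must be checked to vanish (at $a = 0$ using $f_B(0) = B(0)$ finite, at $a = a_{\max}$ or $a = \infty$ using $\int^{a_{\max}} B = \infty$, which sends $e^{-\int_0^a B} \to 0$). I expect the main obstacle to be precisely this bookkeeping: making the integration-by-parts manipulation rigorous — verifying integrability, justifying differentiation under the integral, and discarding boundary terms — uniformly in $v \in \Vv$, and handling the case $a_{\max} = \infty$ in part (i) where one cannot assume $f_B$ bounded and must argue convergence directly from $2\int_0^\infty f_B = 2 < \infty$ together with the exponential factor $e^{-\lambda a/v}$ decaying for $\lambda > 0$.
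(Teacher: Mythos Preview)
Your proposal is correct and is essentially the paper's own argument: the paper defines exactly your $G$ (there denoted $h_\lambda$), uses Jensen's inequality with respect to $\rho$ to compare $\int_\Vv h_\lambda(v)\rho(v)\,dv$ with $h_\lambda(\bar v)$, and establishes the required concavity/convexity of $h_\lambda$ by computing $h_\lambda''(v)$ and integrating by parts once to obtain $h_\lambda''(v)=\tfrac{2\lambda}{v^3}\int_0^\infty a^2 e^{-\lambda a/v} f_B'(a)\,da$, whose sign is that of $f_B'=(B'-B^2)e^{-\int_0^a B}$. The only cosmetic difference is that the paper compares $H_\rho(\lambda)$ and $H_{\bar v}(\lambda)$ for all $\lambda>0$ rather than only at $\lambda=\lambda_{B,\bar v}$, but this is the same mechanism.
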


First note that Theorem~\ref{4_thm:ageinfluence} is valid independently of the density $\rho$ on $\Vv$ (with mean $\bar v$).
Also note that an acceleration of the proliferation with dispersion is not systematic. Indeed, when variability in the aging rates increases (at fixed mean), for some division rates $B$, the proliferation can slow down (statement (i)).

The first derivative of the density $\Psi_B$ being
$$
\Psi'_B(a) = \big(B'(a) - B(a)^2\big) \exp(-\int_0^a B(s) ds),
$$
(for $B$ differentiable), a condition of sign on $B'-B^2$ is nothing but imposing $\Psi_B$ to be decreasing (statement (i)) or increasing (statement (ii)). 
One canonical example is the following. Assume that $B$ is constant equal to $b>0$ so that $\gamma(a,v) = vb$  for any $(a,v) \in \Ss$. Then, $\Psi_B$ is the exponential density of mean $b^{-1}$: it is decreasing and statement (i) of Theorem~\ref{4_thm:ageinfluence} applies. \\

We give the proof of Theorem~\ref{4_thm:ageinfluence} now. Relying mainly on Jensen's inequality, we obtain a straightforward proof.

\begin{proof}[Proof of Theorem~\ref{4_thm:ageinfluence}]
Let $\lambda_{B,\bar v}$ be such that 
\begin{equation*} \label{4_lambda0}
H_{\bar v}(\lambda_{B,\bar v}) = 1 \quad \textrm{with} \quad H_{\bar v}(\lambda) = 2 \int_0^\infty \exp \Big( - \frac{\lambda a }{\bar v} \Big) \Psi_B(a) da ,
\end{equation*}
and $\lambda_{B,\rho}$ be such that
\begin{equation*} \label{4_lambdarho}
H_{\rho}(\lambda_{B,\rho}) = 1 \quad \textrm{with} \quad  H_{\rho}(\lambda) = 2 \iint_{\Ss} \exp \Big( - \frac{\lambda a }{v} \Big) \Psi_B(a) \rho(v) dvda.
\end{equation*}
We know that 
{\bf 1)} both $H_{\bar v}$ and $H_{\rho}$ are continuous on $[0,\infty)$, 
{\bf 2)} both $H_{\bar v}$ and $H_{\rho}$ are decreasing on $[0,\infty)$,
{\bf 3)}  when $\lambda$ goes to infinity, $H_{\bar v}(\lambda) \rightarrow 0$ and $H_{\rho}(\lambda) \rightarrow 0$, 
{\bf 4)} at point $\lambda = 0$, $H_{\bar v}(\lambda = 0) = H_{\rho}(\lambda=0) = 2 >1$. Then, by the intermediate values theorem, we know there exists a unique positive $\lambda_{B,\bar v}$ such that $H_{\bar v}(\lambda_{B,\bar v}) = 1$ and a unique positive $\lambda_{B,\rho}$ such that $H_{\rho}(\lambda_{B,\rho}) = 1$.
Recall that we want to compare $\lambda_{B,\rho}$ to $\lambda_{B,\bar v}$. We claim that {\bf 5)} if $\Psi'_B\leq 0$, then $$H_{\rho}(\lambda) < H_{\bar v}(\lambda)$$ for any $\lambda>0$. Together with points {\bf 1)} to {\bf 4)}, this enables us to conclude that  $\lambda_{B,\rho} < \lambda_{B,\bar v}$.

\vip
Let us prove {\bf 5)}.
For any $\lambda >0$, introduce
$$
h_\lambda : v \in \Vv \leadsto  2 \int_{0}^\infty \exp \Big( - \frac{\lambda a }{v} \Big) \Psi_B(a) da.
$$
Then $H_{\bar v}$ and $H_{\rho}$
can be written
$$
H_{\bar v}(\lambda) =  h_\lambda(\bar v) \quad \textrm{and} \quad H_{\rho}(\lambda) = \int_{\Vv}  h_\lambda(v)  \rho(v) dv
$$
for $\lambda>0$.
We claim that $h_\lambda$ is strictly concave when $\Psi'_B\leq 0$. Then $h_\lambda(\bar v) > \int_{\Vv} h_\lambda(v) \rho(v) dv$ by Jensen's inequality, since $\int_{\Vv} v \rho(v) dv = \bar v$, and {\bf 5)} immediately follows.  \\

We now compute the second derivative of $h_\lambda$,
$$
h_{\lambda}''(v) = \frac{2}{v^2} \int_0^\infty  \frac{\lambda a}{v} \Big(\frac{\lambda a}{v} - 2\Big) e^{-\frac{\lambda a}{v}}  \Psi_B(a) da.
$$
Integrating by parts, recalling that we choose $B$ differentiable, we get
\begin{equation*}
h_{\lambda}''(v) =  \frac{2 \lambda}{v^3} \int_0^{\infty} a^2 e^{-\frac{\lambda a}{v}} \Psi'_B(a) da < 0
\end{equation*}
for any $v \in \Vv$ when $\Psi'_B< 0$. This ends the proof for the case  $\Psi'_B\leq 0$ and the case $\Psi'_B > 0$ is treated in the same way (we need $\Psi_B$ to be bounded for the integration by parts).
\end{proof}

\subsubsection{Small perturbations}

The  density of the aging rates is thought of as a deformation around an average aging rate $\bar v$. 
Given a baseline density $\rho : \Vv \rightarrow [0,\infty)$ continuous and bounded with mean $\bar v$, we define, for $\alpha \in (0,1]$,
\begin{equation} \label{4_eq:rhoalpha}
\rho_\a(v) = \a^{-1}\rho\big(\a^{-1}(v- (1-\a) \bar v)\big), \quad v \in \Vv_\a = \a \Vv + (1 - \a) \bar v,
\end{equation}
so that $\int_{\Vv_\a} \rho_\a(v) dv=1$ and the mean value is constant, {\it i.e.} $\int_{\Vv_\a} v\rho_\a(v)dv=\bar v.$ The density $\rho_\a$ converges in distribution to the Dirac mass $\delta_{\bar v}$ as $\a\to 0.$
Thus we want to investigate the behaviour of the Malthus parameter $\lambda_{B,\rho_\a}$ -- which is a perturbation of $\lambda_{B,\bar v}$ -- for small value of $\a$.

\begin{thm} \label{4_thm:ageperturbation} 
Consider {\bf Model (A+V)} with Specifications \eqref{4_eq:gv}, \eqref{4_eq:vBa} and $\rho(v,dv') = \rho_\a(v')dv'$ with $\rho_\a$ defined in \eqref{4_eq:rhoalpha}.
Then $\a \leadsto \lambda_{B,\rho_\a}$ is twice differentiable and the Malthus parameter $\lambda_{B,\rho_\a}$ defined by \eqref{4_eq:age_lambda} satisfies 
$$
\lambda_{B,\rho_\a} = \lambda_{B,\bar v} + \frac{\a^2}{2} \, \frac{d^2 \lambda_{B,\rho_\a}}{d \a^2}\Big|_{\a=0} + o(\a^2)
$$
with $\lambda_{B,\bar v}$ defined by \eqref{4_eq:age_lambda_novar} and
$$
 \frac{d^2 \lambda_{B,\rho_\a}}{d\a^2} \Big|_{\a=0} = \sigma^2\Big( \int_0^{\infty} \frac{a}{\bar v} e^{ - \frac{\lambda_{B,\bar v} a }{\bar v} } \Psi_B(a) da \Big)^{-1} \int_0^\infty \frac{\lambda_{B,\bar v} a}{\bar v} \Big( \frac{\lambda_{B,\bar v} a}{\bar v} - 2  \Big) e^{- \frac{\lambda_{B,\bar v} a}{\bar v}} \Psi_B(a) da
$$
where $\sigma^2 = \int_\Vv (v-\bar v)^2 \rho(v) dv$.
\end{thm}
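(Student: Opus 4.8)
The plan is to treat the defining relation for the Malthus parameter as an implicit equation and apply the implicit function theorem, then extract the Taylor expansion by carefully tracking the dependence on $\alpha$.

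First I would introduce the bivariate function $G(\lambda,\alpha) = 2\int_{\Vv_\alpha}\int_0^\infty \exp(-\lambda a/v)f_B(a)\,da\,\rho_\alpha(v)\,dv - 1$, and rewrite it after the change of variables $v = \alpha w + (1-\alpha)\bar v$ with $w$ ranging over the fixed set $\Vv$, so that $\rho_\alpha$ disappears and $G(\lambda,\alpha) = 2\int_\Vv\big(\int_0^\infty \exp(-\lambda a/(\alpha w + (1-\alpha)\bar v))f_B(a)\,da\big)\rho(w)\,dw - 1$. This makes the $\alpha$-dependence smooth (the integrand is $C^\infty$ in $\alpha$ since $\alpha w + (1-\alpha)\bar v$ stays in a compact subset of $(0,\infty)$) and lets me differentiate under the integral sign with dominated convergence. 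At $\alpha = 0$ the inner integrand reduces to $\exp(-\lambda a/\bar v)$ for every $w$, so $G(\lambda,0) = H_{\bar v}(\lambda)-1$, whose unique positive root is $\lambda_{B,\bar v}$. Since $\partial_\lambda G(\lambda_{B,\bar v},0) = -2\int_0^\infty (a/\bar v)e^{-\lambda_{B,\bar v}a/\bar v}f_B(a)\,da < 0$ is nonzero, the implicit function theorem gives a $C^2$ (indeed $C^\infty$) branch $\alpha\mapsto\lambda_{B,\rho_\alpha}$ near $\alpha = 0$ with $\lambda_{B,\rho_0}=\lambda_{B,\bar v}$; uniqueness of this branch is guaranteed by the strict monotonicity of $H_\rho$ established in the proof of Theorem~\ref{4_thm:ageinfluence}.

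Next I would compute the derivatives. Writing $h_\lambda(v) = 2\int_0^\infty \exp(-\lambda a/v)f_B(a)\,da$ as in the previous proof, we have $G(\lambda,\alpha) = \int_\Vv h_\lambda\big(\alpha w + (1-\alpha)\bar v\big)\rho(w)\,dw - 1$. Differentiating the identity $G(\lambda_{B,\rho_\alpha},\alpha) = 0$ once in $\alpha$ and evaluating at $\alpha = 0$: since $\int_\Vv (w-\bar v)\rho(w)\,dw = 0$, the term $\partial_\alpha G|_{\alpha=0} = h_\lambda'(\bar v)\int_\Vv (w-\bar v)\rho(w)\,dw = 0$, which forces $\frac{d\lambda_{B,\rho_\alpha}}{d\alpha}\big|_{\alpha=0} = 0$ and hence the first-order term vanishes — consistent with the claimed expansion starting at $\alpha^2$. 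Differentiating twice and using $\lambda' = 0$ at $\alpha=0$, the only surviving contributions are $\partial_\lambda G\cdot \lambda'' + \partial_{\alpha\alpha}G = 0$, where $\partial_{\alpha\alpha}G|_{\alpha=0} = h_\lambda''(\bar v)\int_\Vv(w-\bar v)^2\rho(w)\,dw = \sigma^2\, h_{\lambda_{B,\bar v}}''(\bar v)$. Solving for $\lambda''$ gives
$$
\frac{d^2\lambda_{B,\rho_\alpha}}{d\alpha^2}\Big|_{\alpha=0} = -\frac{\sigma^2\, h_{\lambda_{B,\bar v}}''(\bar v)}{\partial_\lambda h_{\lambda_{B,\bar v}}(\bar v)},
$$
and substituting the formula $h_\lambda''(v) = \frac{2}{v^2}\int_0^\infty \frac{\lambda a}{v}\big(\frac{\lambda a}{v}-2\big)e^{-\lambda a/v}f_B(a)\,da$ already derived in the proof of Theorem~\ref{4_thm:ageinfluence} together with $\partial_\lambda h_\lambda(v) = -\frac{2}{v}\int_0^\infty a\, e^{-\lambda a/v}f_B(a)\,da$ yields exactly the stated expression after cancelling the powers of $\bar v$. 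Finally, Taylor's theorem with the $C^2$ regularity of $\alpha\mapsto\lambda_{B,\rho_\alpha}$ gives $\lambda_{B,\rho_\alpha} = \lambda_{B,\bar v} + \frac{\alpha^2}{2}\lambda''(0) + o(\alpha^2)$.

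The main obstacle I anticipate is purely technical: justifying the differentiation under the integral sign in $\alpha$ (and jointly in $\lambda$ and $\alpha$ for the implicit function theorem) and checking that the derivative $\partial_\lambda G(\lambda_{B,\bar v},0)$ is genuinely nonzero. Both follow from Assumption~\ref{4_ass:age_basic}, which guarantees $f_B$ is integrable with the required tail control (so $\int_0^\infty a^k e^{-\lambda a/v}f_B(a)\,da < \infty$ uniformly for $v$ in the compact set and $\lambda$ in a neighbourhood of $\lambda_{B,\bar v}>0$), giving the dominating functions needed; and from the strict positivity of $f_B$ on a set of positive measure, which makes $\partial_\lambda G(\lambda_{B,\bar v},0)<0$ strictly. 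A secondary point worth stating explicitly is that the implicit branch coincides with the Malthus parameter as originally defined — this is immediate because for each $\alpha$ the equation $H_{\rho_\alpha}(\lambda)=1$ has a unique positive solution, again by the monotonicity argument of Theorem~\ref{4_thm:ageinfluence}.
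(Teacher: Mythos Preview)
Your argument is correct and complete, but it takes a genuinely different route from the paper. You apply the implicit function theorem directly to the scalar equation $G(\lambda,\alpha)=0$ obtained from the characterisation \eqref{4_eq:age_lambda} after the change of variables $v=\alpha w+(1-\alpha)\bar v$; smoothness in $(\lambda,\alpha)$ and nonvanishing of $\partial_\lambda G$ then give the $C^2$ branch and its Taylor coefficients in one stroke. The paper instead follows Michel's spectral perturbation method: Proposition~\ref{4_prop:age_d1} starts from the duality identity $(\lambda_\alpha-\lambda_{\alpha-\varepsilon})\iint\phi_\alpha N_{\alpha-\varepsilon}=\iint(\Aa_\alpha\phi_\alpha-\Aa_{\alpha-\varepsilon}\phi_\alpha)N_{\alpha-\varepsilon}$ between the direct and adjoint eigenvectors, and only afterwards specialises using the explicit formulae of Lemma~\ref{4_lem:ageeigen} and the same change of variables you use. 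The end product --- the first-derivative formula of Proposition~\ref{4_prop:age_d1} --- is exactly $-\partial_\alpha G/\partial_\lambda G$, so from that point on the two proofs coincide. Your approach is shorter and more elementary because it exploits the fact that here the eigenvalue is already characterised by a single explicit scalar equation; the paper's approach is heavier but is the one that generalises to situations (such as {\bf Model (S+V)} or the case with heredity) where no such explicit equation is available and one must work at the operator level.

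One small caution: when you write that substituting $h_\lambda''(\bar v)$ and $\partial_\lambda h_\lambda(\bar v)$ ``yields exactly the stated expression after cancelling the powers of $\bar v$'', do the bookkeeping carefully. With $h_\lambda''(\bar v)=\tfrac{2}{\bar v^2}\int_0^\infty\tfrac{\lambda a}{\bar v}(\tfrac{\lambda a}{\bar v}-2)e^{-\lambda a/\bar v}f_B(a)\,da$ and $\partial_\lambda h_\lambda(\bar v)=-2\int_0^\infty\tfrac{a}{\bar v}e^{-\lambda a/\bar v}f_B(a)\,da$, the ratio carries an extra factor $\bar v^{-2}$ relative to the displayed formula in the theorem; this same factor is produced by the paper's own computation in the proof of Proposition~\ref{4_prop:age_d2} (via \eqref{4_lim_dFdv} and \eqref{4_eq;lim_D1}), so the discrepancy is in the statement rather than in either proof, and it is invisible in all the numerics since $\bar v=1$ throughout.
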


Note that the perturbation plays at order 2, but not at order 1 since the mean $\bar v$ is preserved (we prove $\frac{d \lambda_{B,\rho_\a}}{d \a}\big|_{\a=0} =0$ in Proposition~\ref{4_prop:age_d1} below, Section~\ref{4_sec:proofs}). The amplitude of the perturbation at order 2 depends on the baseline aging rate density $\rho$ only through its means $\bar v$ and its variance $\sigma^2$.
The proof heavily relies on the explicit expressions of the eigenelements solution to the eigenproblem \eqref{4_eq:EDP_N}--\eqref{4_eq:EDP_Psi}, available when neglecting heredity in the transmission of the aging rate (see Lemma~\ref{4_lem:ageeigen} below, Section~\ref{4_sec:proofs}).

\begin{rk} If $B$ is differentiable, integrating by parts we get
$$
 \frac{d^2 \lambda_{B,\rho_\a}}{d\a^2} \Big|_{\a=0} =
 \lambda_{B,\bar v} \sigma^2
 \Big( \int_0^{\infty} a e^{ - \frac{\lambda_{B,\bar v} a }{\bar v} } \Psi_B(a) da \Big)^{-1} 
 \int_0^\infty a^2 e^{- \frac{\lambda_{B,\bar v} a}{\bar v}} \Psi'_B(a) da
$$
and the variation study of $\a \leadsto \lambda_{B,\rho_\a}$ around $\a = 0$ follows as in the proof of Theorem~\ref{4_thm:ageinfluence} above.\\
\end{rk}

\vip

The class of $B$ for which Theorem~\ref{4_thm:ageinfluence} applies is unfortunately quite restrictive. In view of applications, a monotonous density $\Psi_B$ does not seem realistic since we expect to observe a mode in the density of the physiological ages at division (exactly as for lifetimes). One can see the distribution of lifetimes, also called {\it doubling times} or {\it generation times}, in \cite{4_osella} (Figure 1.D), and in \cite{4_VergassolaJun} for various medium (Figure S10). Given a more realistic division rate $B$, in this simple model, one can numerically solve Equations \eqref{4_eq:age_lambda} and \eqref{4_eq:age_lambda_novar} in order to compare $\lambda_{B,\rho}$ to $\lambda_{B,\bar v}$.

A plausible division rate would be of the form $B(a) = (a-1)^\b {\bf 1}_{\{a\geq 1\}}$. For such a division rate, Theorem~\ref{4_thm:ageinfluence} does not apply since $\Psi_B$ is non-monotonous.
Figure~\ref{4_fig:AgeVar_Bpuissance} shows curves $\a \leadsto \lambda_{B,\rho_\a}$ for different values of $\beta$ and we see that all these curves are non-increasing, which means that variability in the aging rate slows down the growth speed of the overall cell population, for such division rates~$B$ and the tested baseline variability kernel (a truncated Gaussian distribution).

\begin{figure}[h!]
\begin{center}
\includegraphics[width=13.5cm]{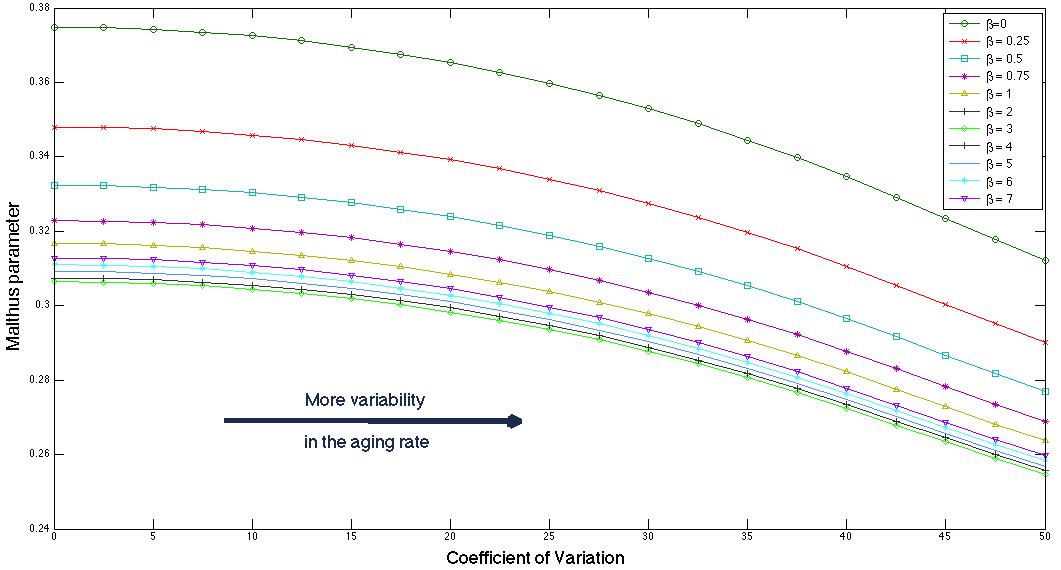}
\caption{{\it {\bf Model (A+V)}. $CV_{\rho_\a} \leadsto \lambda_{B,\rho_\a}$ defined by \eqref{4_eq:age_lambda} for $\rho_\a(v) = \a^{-1}\rho\big(\a^{-1}(v- (1-\a)\bar v)\big)$ (the baseline density $\rho$ is a Gaussian density with mean $\bar v = 1$ and standard deviation $0.7$ truncated on $[0,2]$) and different division rates $\gamma(a,v) = vB(a)$ with $B(a) = (a-1)^\beta {\bf 1}_{\{a\geq1\}}$, $\beta \in \{0, 0.25, 0.5, 0.75, 1, 2, \ldots, 7\}$. 
Reference (all cells age at rate $\bar v = 1$): point of null abscissa and y-coordinate $\lambda_{B,\bar v = 1}$ defined by \eqref{4_eq:age_lambda_novar}.} \label{4_fig:AgeVar_Bpuissance}} 
\end{center}
\end{figure}

\newpage

\section{The size-structured model with variability} \label{4_sec:size}

\noindent In this section we study the size-structured model with variability {\bf (S+V)}.
A precise definition of the Malthus parameter $\lambda_{\gamma,\rho}$ is based on the associated eigenproblem.
We refer to Doumic and Gabriel \cite{4_DG10} (size-structured model without variability) and references therein for the study of the eigenproblem.  We carry out a numerical study of our initial question. The purpose of the present section is twofold.\\

\noindent {\bf 1)} Exhibit first results regarding the influence of variability on the Malthus parameter, at least for biologically realistic parameters. \\

\noindent {\bf 2)} Show how one can use stochastic simulations to approximate numerically the Malthus parameter, and highlight the strength of this approach. Indeed one could use deterministic methods and approximate the Malthus parameter via a discretisation of the PDE. The stochastic approach we propose enables us to build confidence intervals for $\lambda_{\gamma,\rho}$ (in order to check if the variation compared to a reference value is significant or not).
In addition, the computational cost of our method should be invariant to the dimension of the model and it could be used as well for more complex models.

\newpage
\subsection{Stochastic approach} \label{subsec:size_sto}

\subsubsection{Stochastic modeling}

We closely follow here the description given in Doumic {\it et al.} \cite{4_DHKR1}.
Introduce the infinite genealogical tree
$$
\TT = \bigcup_{m=0}^{\infty} \{0,1 \}^m.
$$
Informally we may view $\TT$ as a population of cells (where the initial individual is denoted by $\emptyset$).
Each node $u \in \TT$, identified with a cell of the population, has a mark
$$
(b_u, \zeta_u, \xi_u , \tau_u),
$$ 
where $b_u$ is the birth time, $\zeta_u$ the lifetime, $\xi_u$ the size at birth and $\tau_u$ the individual feature of cell~$u$. We introduce $d_u$ the division time of $u$, $d_u = b_u + \zeta_u$.\\

The growth rate of one cell is drawn according to 
$$
\rho(v,dv') = \PP(\tau_u \in dv' | \tau_{u^-} = v)
$$
where $u^-$ is the parent of $u$.
We choose $g_x(x,v) = vx$ which means the size of each sizes evolves exponentially. Such an assumption is realistic, we refer to the analysis of single cells growth  by Schaechter {\it et al.} \cite{4_exponential} and by Robert {\it et al.} \cite{4_DHKR2} (Figure 2). Then the size at time $t$ of cell $u$ is
$$
\xi_u^t = \xi_u e^{\tau_u t} , \quad t \in [b_u,d_u).
$$
Stochastically, cells divide according to the rate $\gamma(x,v)dt$,
$$
\PP (\zeta_u \in [t,t+dt) | \zeta_u \geq t , \xi_u = x_b, \tau_u = v) = \gamma(x_b e^{vt},v)dt
$$
where $x_b e^{vt} = x$ is the size after a time $t$  for a cell born with size $x_b$ and growth rate $v$.
Cells divide into two equal parts,
\begin{equation} \label{4_eq:2equalparts}
\xi_{u} = \tfrac{1}{2} \xi_{u^-} \exp\big(\tau_{u^-}\zeta_{u^-}\big).
\end{equation}
From the two previous relations, we readily obtain that $(\xi_u,u\in \TT)$ is a bifurcating Markov chain (see \cite{4_Guyon} for a definition) with explicit transition
\begin{equation*} \label{4_eq:sizetransition}
 \Pp_\gamma(x,y)dy = \PP(\xi_u \in dy | \xi_{u^-} = x).
\end{equation*}

This stochastic modeling and the previous deterministic modeling match. 
Define the set of living cells at time $t$,
\begin{equation} \label{4_eq:livingset}
\partial \Tt_t =  \{ u \in \TT ; b_u \leq t < d_u \}.
\end{equation}
Define the measure $n(t,dxdv)$ as the expectation of the empirical measure at time $t$ over smooth test functions $f : \Ss \rightarrow \RR$,
$$
\iint_\Ss f(x,v) n(t,dxdv) = \EE \Big[  \sum_{u \in \partial \Tt_t} f(\xi_u^t, \tau_u) \Big]
$$
where $(\xi_u^t, \tau_u)$ is the size at time $t$ of $u$ together with its growth rate, constant through time. 
Then $n(t,dxdv)$ satisfies in the weak sense of measures the partial differential equation of {\bf Model (S+V)}. We refer to Theorem 1 of Doumic {\it et al.} \cite{4_DHKR1} for such a result.

\subsubsection{Stochastic tools}

Relying on a stochastic procedure, we numerically evaluate $\lambda_{\gamma,\rho}$ for given division rates $\gamma$ and growth rate densities $\rho$.
Our estimates of $\lambda_{\gamma,\rho}$ are based on trees observed between time $0$ and time $T$. The theoretical framework legitimating our approach is thus a continuous time setting as in Bansaye {\it et al.} \cite{4_BDMT} or Cloez \cite{4_cloez}. \\

\paragraph{{\it Estimation of the Malthus parameter.}}
The value of the Malthus parameter is encoded in the time evolution of the tree.
To build an estimator one can exploit the asymptotic behaviour of empirical means at time $t$, as studied in Cloez \cite{4_cloez},
\begin{equation} \label{4_eq:convmeandelta} 
e^{- \lambda_{\gamma,\rho} t}  \sum_{u \in \partial \Tt_t} f(\xi_u^t, \tau_u) \approx  c_{\gamma,\rho}(f) W_{\gamma,\rho} ,
\end{equation}
when $t$ is large, in probability, where $c_{\gamma,\rho}(f) $ is a constant and $W_{\gamma,\rho}$ a non-degenerated random variable.
Now define the sum of all sizes of living cells at time $t$, 
\begin{equation*} \label{4_eq:biomass}
\mathfrak{M}_t = \sum_{u\in \partial \Tt_t} \xi_u^t = \sum_{u\in \partial \Tt_t} \xi_u e^{\tau_u (t-b_u)},
\end{equation*}
called biomass in the biological literature.
If we observe $(\xi_u^{T/2} , u\in \partial \Tt_{T/2})$ and $(\xi_u^T, u \in \partial \Tt_{T})$ we can define an estimator of the Malthus parameter as
\begin{equation} \label{4_eq:lambbiomass}
\widehat{\lambda}_T = \frac{2}{T} \ln\Big( \frac{\mathfrak{M}_T}{\mathfrak{M}_{T/2}} \Big).
\end{equation}
Approximation \eqref{4_eq:convmeandelta} ensures that this estimator is close to the true value for large $T$. 
Note that a large collection of estimators based on \eqref{4_eq:convmeandelta} can be built, choosing different test functions, and we discuss this in Section \ref{4_sec:appendix} (see Figure \ref{4_fig:biomassvsnb}).
 We see that the estimator \eqref{4_eq:lambbiomass} requires the observation of the whole population at two different large times, chosen for simplicity as $T/2$ and $T$. Thus in order to numerically approximate $\lambda_{B,\rho}$ by \eqref{4_eq:lambbiomass} for given parameters $B$ and $\rho$ 
 one need to simulate continuous time trees up to a large time $T$.\\

\paragraph{{\it Simulation of a continuous time tree up to time $T$.}} \label{4_simulationfulltree}
Let us be given a division rate $B$ and a density $\rho$ on $\Vv$.
To simulate a full tree up to a time $T$, we have to keep the birth and division dates. Set $\xi_\emptyset = x_\emptyset$ some given initial value and $b_\emptyset = 0$. For any cell $u^{-} \in \TT$, given its birth time $b_{u^-}$ and its size at birth $\xi_{u^-}$, we compute $(\zeta_{u^-},\tau_{u^-})$ and its division time $d_{u^-}$ in the following way,
\begin{itemize}
\item[{\bf 1)}] Draw its growth rate $\tau_{u^-}$ according to $\rho$.
\item[{\bf 2)}] Draw $\xi_u$ given $\xi_{u^-} = x$ according to the transition $\Pp_{\gamma}(x,y)dy$.
\item[{\bf 3)}] Recalling \eqref{4_eq:2equalparts}, compute its lifetime $\zeta_{u^-}$ by 
\begin{equation} \label{eq:computezeta}
\zeta_{u^-} = \frac{1}{\tau_{u^-}} \ln(\frac{2 \xi_u}{\xi_{u^-}}).
\end{equation}
Set its division time $d_{u^-} = b_{u^-}+\zeta_{u^-}$ (and set $b_u = d_{u^-}$). 
Then,
\begin{enumerate}
\item[(i)] If $d_{u^-} \geq T$, keep $(b_{u^-}, \zeta_{u^-}, \xi_{u^-},\tau_{u^-})$ but drop $(b_u, \xi_u)$ and do not simulate further the descendants of $u^-$.
\item[(ii)] If $d_{u^-} < T$, keep $(b_u, \xi_u)$ and go back to Step {\bf 1)}.
\end{enumerate}
\end{itemize}

\subsection{Influence of variability on the Malthus parameter}

\subsubsection{Numerical protocol}

We first specify the division rate $\gamma$ and the growth rate Markov kernel~$\rho$. We recall that the concentration $n(t,x,v)$ of cells of size $x \geq 0$ and growth rate $v\in \Vv$ at time $t\geq 0$ evolves as
\begin{equation*} \label{4_eq:edp_sizevar}
\left\{
\begin{array}{l}
\frac{\partial}{\partial t} n(t,x,v) + \frac{\partial}{\partial x}\big(g_x(x,v) n(t,x,v)\big) + \gamma(x,v)n(t,x,v) = 4 \int_\Vv \gamma(2x,v') n(t,2x,v') \rho(v',v) dv', \\ \\
g_x(x=0,v') n(t,x=0, v') = 0, \quad \quad
n(t=0,x,v)=  n^{{\rm in}}(x,v).
\end{array}
\right.
\end{equation*}

\paragraph{{\it Division rate.}}
Mimicking again S. Taheri-Araghi {\it et al.} \cite{4_VergassolaJun}, we work with a division rate per unit of time $\gamma$ chosen as
\begin{equation} \label{4_eq:vxBdelta}
\gamma(x,v) = vxB(x)
\end{equation}
for a continuous $B: [0,\infty) \rightarrow [0,\infty)$ such that $\int^{\infty} B(s)ds = \infty$.
Note that $vxB(x)dt = B(x) dx$ (since $dx = g_x(x,v) dt$ with $g_x(x,v) = vx$) and that is why we call $B$ a division rate per unit of size.
As previously, from now on we denote the Malthus parameter $\lambda_{\gamma,\rho}$ by $\lambda_{B,\rho}$ to emphasize we work under Specification \eqref{4_eq:vxBdelta}. 
We choose $B$ of the general form 
\begin{equation} \label{4_eq:Bsimu}
B(x) = B_{x_0,\beta}(x) = (x-x_0)^{\beta} {\bf 1}_{\{ x \geq x_0 \}}
\end{equation}
for $x_0\geq 0$ and $\b\geq0$. This power form in $\beta$ with some lag $x_0$ is inspired by experimental data. For instance we refer to \cite{4_DHKR2}, Supplementary Figures S1 and S2, where the division rate is estimated for the age-structured and size-structured models. Adjusting $\b$ and $x_0$ is an easy way to mimic qualitatively all these curves. The estimation is not accurate for large sizes or ages since observations are lacking and we opt not to truncate $B$ making it constant after some threshold but to let it grow to infinity.
For such a choice of $\gamma$ and $B$, one has to draw $\xi_u$ given $\xi_{u^-}$ according to 
\begin{equation} \label{4_eq:sizetransition}
 \Pp_\gamma(x,y)dy = \PP(\xi_u \in dy | \xi_{u^-} = x) = 2 B(2y) \exp \big(-\int_{x/2}^y 2 B(2s) ds \big) {\bf 1}_{y \geq x/2}dy,
\end{equation}
in Step {\bf 2)} of the previous algorithm\footnote{Note that for the choice \eqref{4_eq:Bsimu} of $B$, we can easily do it  inverting the cumulative distribution function.}.
In our numerical tests, we pick $x_0 = 1$ and $\b = 2$.\\

\paragraph{{\it Parametrisation of the variability.}}
We suppose here that the growth rate of the daughter cell is independent of its parent, by picking a variability kernel $\rho(v,dv')$ independent of $v$. 
Given a non-degenerated baseline density $\rho : \Vv \rightarrow [0,\infty)$ with mean $\bar v$, we define, for $\alpha \in (0,1]$,
\begin{equation} \label{4_eq:rhoalpha2}
\rho_\a(v) = \a^{-1}\rho\big(\a^{-1}(v- (1-\a) \bar v)\big), \quad v \in \Vv_\a = \a \Vv + (1 - \a) \bar v.
\end{equation}
Note that $\rho_\a$ has also mean $\bar v$ (we preserve the mean growth rate).
We set $\int_\Vv (v-\bar v)^2 \rho(v)dv = \eta^2$.
The coefficient of variation, denoted by CV indexed by the probability distribution considered, is defined as the quotient of its standard error and mean.
Then $CV_\rho =  \eta/\bar v$ and $CV_{\rho_\a} = \a CV_{\rho}$. 
Set $\varphi(x) = (2\pi)^{-1/2} e^{-x^2/2}$ the density of the standard gaussian and $\Phi(x)= \int_{-\infty}^x \varphi(y) dy$ its cumulative distribution function.
We pick for the baseline density $\rho$,
\begin{equation} \label{4_eq:rho}
\rho(v) = \frac{\varphi\big(\sigma_\eta^{-1}(v-\bar v)\big) {\bf 1}_{v_{\min} \leq v \leq v_{\max}} }{\Phi\big(\sigma_\eta^{-1}(v_{\max}-\bar v)\big)-\Phi\big(\sigma_\eta^{-1}(v_{\min}-\bar v)\big)} 
\end{equation}
with $\bar v = (v_{\min}+v_{\max})/2$. That is to say, $\rho$ is the truncation on $ [v_{\min},v_{\max}] = \Vv$ of a Gaussian distribution with mean $\bar v$ and standard deviation $\sigma_\eta$.
We set $v_{\min} = 0$, $v_{\max} = 2$ so that $\bar v = 1$ and we choose $\sigma_\eta = 0.70$ so that $CV_\rho = 50\%$ (using the known formulae of the moments of a truncated Gaussian distribution). 
Once $\rho$ is fixed, it enables us to define the collection $\big(\rho_\a,\a\in(0,1]\big)$.  
For $\alpha \in (0,1]$ the support of the density $\rho_\a$ is $\Vv_\a = [1-\a,1+\a]$
and $CV_{\rho_\a} = \a/2$ for such a choice of~$\rho$. The larger the coefficient of variation $CV_{\rho_\a}$, the more variability in the growth rate. \\

\paragraph{{\it Estimation of the curve $CV_{\rho_\a} \leadsto \lambda_{B,\rho_\a}$.}}
For a given $\a \in (0,1)$, we simulate $M = 50$ continuous time trees up to a large time $T$, picking $x_\emptyset = 2$, $v_\emptyset = 1$, with division rate $B = B_{x_0 = 1,\beta = 2}$ and growth rate density
$$
\rho(v,dv')=\rho_\a(v')dv'.
$$ 
We obtain a collection 
$$
(\widehat{\lambda}_{T,m}^{(\a)} , m = 1,\ldots,M)
$$
of $M$ estimators of the Malthus parameter computed according to \eqref{4_eq:lambbiomass}.
We denote the mean of these $M$ estimators by $\widehat{\lambda}_{T}^{(\a)}$. It enable us to obtain a reconstruction of the curve $CV_{\rho_\a} \leadsto \lambda_{B,\rho_\a}$.\\

The question now states as follows: what can we say about $\lambda_{B,\rho_\a}$ compared to $\lambda_{B,\bar v} = \bar v$ the average growth rate, which is the Malthus parameter of a population without  variability in the growth rate? 

\subsubsection{Main result: the Malthus parameter decreases when introducing variability}
Figure~\ref{4_fig:SizeVar_lambdaalpha} below shows the curve
$$
CV_{\rho_\a} \leadsto \widehat{\lambda}_{T}^{(\a)},
$$
 together with a confidence interval for each $\a$, such that it contains 95\% of the $M$ estimators. The aim is to get confidence intervals of good precision and to that end we choose large enough times~$T$.
The mean numbers of living individuals at time $T$ is at least of magnitude $50~000$, see Supplementary Table~\ref{4_tab:size} in Section~\ref{4_sec:appendix}.  
(Such an amount of data, in a full tree case, would correspond to the observation of the first 16 generations.)
We first observe that $\lambda_{B,\rho_\a}$ is significantly lower than the value of reference $ \int_\Vv  v\rho_{\a}(v) dv = \bar v = 1$, for $CV_{\rho_\a}$ exceeding 10\%. In addition, the curve is significantly decreasing as $CV_{\rho_\a}$ increases, {\it i.e.} as there is more and more variability in the growth rate.
\begin{figure}[h!]
\begin{center}
\includegraphics[width=12cm]{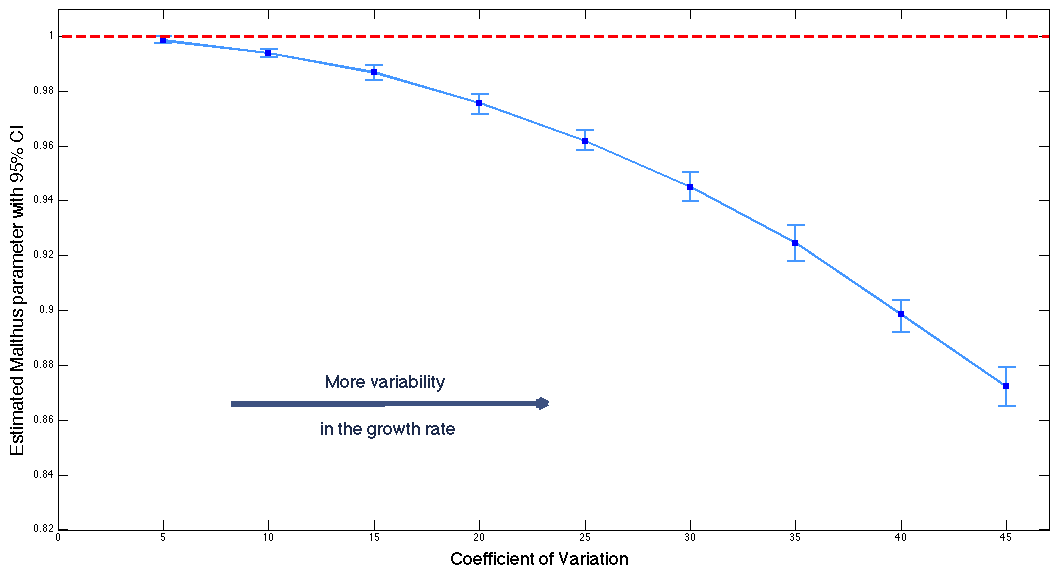}
\caption{{\it {\bf Model (S+V)}. Division rate $\gamma(x,v) = vxB(x)$ with $B(x) = (x-1)^2{\bf 1}_{\{x\geq1\}}$. Estimated curve $CV_{\rho_\a} \leadsto \lambda_{B,\rho_\a}$ using estimator \eqref{4_eq:lambbiomass} (mean and 95\% confidence interval based on $M=50$ Monte Carlo continuous time trees). Reference (all cells grow at a rate $\bar v=1$): $\lambda_{B,\bar v} = \bar v = 1$.} \label{4_fig:SizeVar_lambdaalpha}}
\end{center}
\end{figure}

\subsubsection{Robustness our results}

The result illustrated by Figure~\ref{4_fig:SizeVar_lambdaalpha} is robust when changing parameters or changing slightly the model. \\

\noindent {\it Division rate change.} The result is robust when changing the division rate $B$. In particular, we have explored several couples of parameters $(x_0,\b)$ (recall Parametrisation \eqref{4_eq:Bsimu}). The conclusion remains the same: the Malthus parameter significantly decreases when there is more and more variability in the growth rate.
Supplementary Table~\ref{4_tab:otherB} in Section~\ref{4_sec:appendix} displays the results for $x_0 = 1$ and $\beta = 8$.\\

\noindent {\it Asymmetric division.} So far we have assumed symmetry in the division. However a slight asymmetry can arise, see Marr {\it et al.} \cite{4_marr} or Soifer {\it et al.} \cite{4_Soifer} (Figure 8). 
Asymmetry is defined as the ratio of the size at birth of one daughter cell over the size at division of its parent. The distribution of this ratio shows a mode at $0.50$ and has a coefficient of variation of about $4\%$.
In our numerical study, we amplify asymmetry since we aim at measuring an effect of asymmetry on the curve $CV_{\rho_\a} \leadsto \lambda_{B,\rho_\a}$.
We immediately extend the model presented in Section \ref{subsec:size_sto} to allow for asymmetry: we assume that a cell of size $x$ divides into two cells of sizes $u x$ and $(1-u)x$ with $u$ uniformly drawn on the interval $[\ep,1-\ep]$, independently of everything else, for $0\leq \ep <1/2$. A dividing cell does not produce an arbitrarily small cell, that is why we chose $\ep \neq 0$. However we fix $\ep$ quite close to $0$ for our study. Supplementary Table~\ref{4_tab:asymm} in Section~\ref{4_sec:appendix} collects the results for $\ep = 0.10$. Once again there is no significant change compared to Figure~\ref{4_fig:SizeVar_lambdaalpha}.
Thus asymmetry seems to have no significant impact on the Malthus parameter, in presence of variability.\\

\noindent {\it Linear growth.} Consider now that the size at time $t$ of cell $u$ is $\xi_u^t = \xi_u + \tau_u t$ for $t\in[b_u,d_u)$. In order to simulate a continuous time tree, we still use the transition defined by \eqref{4_eq:sizetransition} in Step~{\bf 2)} of our algorithm and one has only to replace \eqref{eq:computezeta} by $\zeta_{u^-} = (2\xi_u-\xi_{u^-})/\tau_{u^-}$ in Step {\bf 3)}. 
We still observe a penalisation due to variability in terms of the overall cell population growth (see Supplementary Table \ref{4_tab:linear} in Section \ref{4_sec:appendix}). For coefficient of variations around 15\%-20\% in the growth rates, the decrease of the Malthus parameter is estimated at 1--1.5\%. \\

\noindent {\it Unit size versus unit time division rate.} Notice that the model studied in Doumic {\it et al.} \cite{4_DHKR1} corresponds to the choice $\gamma(x,v) = B(x)$ (there $B$ is a rate per unit of time) instead of $\gamma(x,v) = vxB(x)$ (here $B$ is a rate per unit of size). This is fundamentally different. As one can see in \eqref{4_eq:sizetransition}, with the choice $\gamma(x,v) = vxB(x)$, the size at birth of a cell actually does not depend on the growth rate of its parent whereas it would be the case with the choice $\gamma(x,v) = B(x)$, since
$$
\PP(\xi_{u^-} \in dy | \xi_u = x, \tau_u = v) = \frac{B(2y)}{vy} \exp\big(-\int_{x/2}^y \frac{B(2s)}{vs}ds\big) {\bf 1}_{\{y\geq x/2\}},
$$
(see \cite{4_DHKR1}, Equation (11)), and this is the main difference. In order to simulate a continuous time  tree up to a given time, at Step~{\bf 2)} of our algorithm, we draw $\xi_u$ given $\xi_{u^-}$  and $\tau_u$ (simulated in Step~{\bf 1)}) according to the previous equation (we use a rejection sampling algorithm for this Step~{\bf 2)}).
Whatever the specification is, our results concerning the Malthus parameter remain unchanged (see Supplementary Table \ref{4_tab:unitB} in Section \ref{4_sec:appendix}): we observe a decrease when there is more and more variability (the decrease seems slightly higher in the case $\gamma(x,v) = vxB(x)$ than in the case $\gamma(x,v) = B(x)$). \\

 Three main conclusions regarding {\bf Model (S+V)} are in order. For a unit size division rate $B$ experimentally plausible, when there is variability in the growth rate among cells, 
\begin{itemize}
\item[{\bf 1)}] The Malthus parameter is lower than the value of reference computed assuming all cells grow at the mean growth rate. 
\item[{\bf 2)}] The variation is of magnitude 2\% for experimentally realistic coefficients of variation in the growth rates distribution, around 15--20\%. 
\item[{\bf 3)}] In addition the Malthus parameter is monotonous: it decreases when there is more and more variability.
\end{itemize}
These conclusions are robust as argued above changing the unit size $B$, introducing asymmetry in the division, assuming the individual growth of each cell is linear instead of exponential or even considering a unit time division rate $B$. We stress that our conclusion {\bf 1)} coincides with conventional wisdom in biology and our methodology has the advantage of bringing some quantification through {\bf 2)}.

\section{Discussion}  \label{4_sec:discussion}

The scope of the perspectives is large and includes both theoretical, with analytical and statistical aspects, and experimental issues. We mention here some open questions. \\

\noindent {\it Eigenproblem.} In order to prove existence and uniqueness of the eigenelements in {\bf Model (A+V)}, we would like to find minimal assumptions on the aging speed $g_a$, on the division rate $\gamma$ and on the variability kernel $\rho$ (based on the general results of Mischler and Scher \cite{4_MischlerScher} for instance).  \\

\noindent {\it Without heredity -- $\rho(v,v') = \rho(v')$.} For {\bf Model (A+V)}, preserving the mean aging rate, can we build more general classes of $B$ (including experimentally more plausible $B$) in order to discriminate between the two cases $\lambda_{B,\rho}> \lambda_{B,\bar v}$ and $\lambda_{B,\rho} < \lambda_{B,\bar v}$?
For {\bf Model (S+V)}, preserving the mean individual growth rate, one can ask if the these two cases are possible. Is there some {\it plausible} $B$ such that $\lambda_{B,\rho} > \lambda_{B,\bar v}$?
We would like to build general classes of $B$ to discriminate between the two cases (specifying a density $\rho$ if needed, a truncated Gaussian for instance), and to compute the perturbations of $\lambda_{B,\rho_\a}$ around $\lambda_{B,\bar v}$ for $\rho_\a$ tending to $\delta_{\bar v}$ as $\a\rightarrow 0$ (using the same kind of tools as to prove Theorem~\ref{4_thm:ageperturbation}, see also Michel \cite{4_Michel06}). Still for {\bf Model (S+V)}, preserving the mean lifetime of the cells, which conclusions can be derived?\\

\noindent {\it With heredity -- general $\rho(v,v')$.}  We would like to take into account heredity in the transmission of the aging or growth rate, considering a general Markov kernel $\rho(v,v')$. 
In particular, for  {\bf Model (A+V)}, a natural question is: how does heredity in the transmission of the aging rate influence the results of Theorems~\ref{4_thm:ageinfluence} and~\ref{4_thm:ageperturbation}? Again, the choice of a quantity to preserve is crucial, and different possibilities appear. \\

\noindent {\it Alternative models.} Some other models successfully describe the division of {\it E. coli}. We refer to Amir \cite{4_ArielAmir} and Taheri-Araghi {\it et al.} \cite{4_VergassolaJun}. We wonder what can be said on the Malthus parameter in these two models. Preliminary answers are given in Olivier \cite{mathese} (Chapter 4).

\section{Proof of Theorem~\ref{4_thm:ageperturbation}} \label{4_sec:proofs}

As a preliminary, note that neglecting heredity in the transmission of the aging rate enables us to obtain explicit expressions for the eigenvectors $N_{\gamma,\rho}$ and $\phi_{\gamma,\rho}$, and an implicit relation which uniquely defines the Malthus parameter $\lambda_{\gamma,\rho}$.

\begin{lem} \label{4_lem:ageeigen}
Work under Assumption~\ref{4_ass:age_basic}. Assume in addition that $\rho(v,dv') = \rho(v') dv'$ for some continuous and bounded $\rho : \Vv \rightarrow [0,\infty)$ such that $\int_\Vv \rho(v')dv' = 1$.
Then the eigenvalue $\lambda_{\gamma,\rho}>0$ is uniquely defined by 
\begin{equation*}
2 \iint_{\Ss}  \frac{\gamma(a,v)}{g_a(a,v)} \exp \bigg( - \int_0^a \frac{\lambda_{\gamma,\rho} + \gamma(s,v)}{g_a(s,v)} ds \bigg) \rho(v) dv da = 1.
\end{equation*}
The unique solution $N_{\gamma,\rho}$ such that $(g_aN_{\gamma,\rho}) \in \Cc^1_b\big(\RR^+; \Cc_b(\Vv)\big)$ to \eqref{4_eq:EDP_N} and the unique solution $\phi_{\gamma,\rho} \in \Cc^1_b\big(\RR^+; \Cc_b(\Vv)\big)$ to \eqref{4_eq:EDP_Psi} are respectively given by, for any $(a,v)\in \Ss$,
\begin{equation*} \label{4_eq:age_N}
N_{\gamma,\rho}(a,v) =  \frac{\kappa \rho(v)}{g_a(a,v)} \exp \Big( - \int_0^a \frac{\lambda_{\gamma,\rho} + \gamma(s,v)}{g_a(s,v)} ds \Big), \quad \phi_{\gamma,\rho}(a,v) = \frac{\kappa' \int_a^{\infty} \gamma(s,v) N_{\gamma,\rho}(s,v) ds}{g_a(a,v) N_{\gamma,\rho}(a,v)}
\end{equation*}
with $\kappa$, $\kappa'$  normalizing constants such that $\iint_\Ss N_{\gamma,\rho}= 1$ and $\iint_\Ss N_{\gamma,\rho} \phi_{\gamma,\rho} =~1$.
\end{lem}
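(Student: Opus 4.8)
\textbf{Proof plan for Lemma~\ref{4_lem:ageeigen}.}
The plan is to solve the first-order ODE in the $a$-variable explicitly (for each fixed $v$), use the boundary condition and the normalization to pin down the eigenvalue via an implicit relation, and then verify the analogous facts for the adjoint problem. First I would treat $v\in\Vv$ as a parameter and read \eqref{4_eq:EDP_N} as a linear ODE in $a$ for the function $a\leadsto (g_aN_{\gamma,\rho})(a,v)$: writing $m(a,v)=g_a(a,v)N_{\gamma,\rho}(a,v)$, the equation becomes $\partial_a m(a,v) = -\bigl(\lambda_{\gamma,\rho}+\gamma(a,v)\bigr)\,m(a,v)/g_a(a,v)$, whose solution is $m(a,v)=m(0,v)\exp\!\bigl(-\int_0^a (\lambda_{\gamma,\rho}+\gamma(s,v))/g_a(s,v)\,ds\bigr)$. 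Dividing by $g_a$ gives the claimed form of $N_{\gamma,\rho}$ with $m(0,v)=\kappa\rho(v)$ to be fixed; Assumption~\ref{4_ass:age_basic}{\bf (d)} (the integral of $\gamma/g_a$ diverges, and $\frac{\gamma}{g_a}e^{-\int_0^a \gamma/g_a}$ is bounded) is exactly what guarantees that this $N_{\gamma,\rho}$ is integrable on $\Ss$ for $\lambda_{\gamma,\rho}\geq 0$ (so $\kappa$ exists and is positive) and that $g_aN_{\gamma,\rho}\in\Cc^1_b(\RR^+;\Cc_b(\Vv))$, using {\bf (a)}, {\bf (b)} for the continuity/boundedness and $\Vv$ compact.

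Next I would plug this explicit $N_{\gamma,\rho}$ into the boundary condition of \eqref{4_eq:EDP_N}. Since here $\rho(v,v')=\rho(v')$ does not depend on $v$, the right-hand side $2\iint_\Ss \gamma(a,v)N_{\gamma,\rho}(a,v)\rho(v')\,dv\,da$ factors as $\rho(v')\cdot 2\iint_\Ss \gamma(a,v)N_{\gamma,\rho}(a,v)\,dv\,da$, while the left-hand side is $m(0,v')=\kappa\rho(v')$. Cancelling $\kappa\rho(v')$ (legitimate on the support of $\rho$; off the support both sides vanish) reduces the boundary condition to
\begin{equation*}
1 = 2\iint_\Ss \frac{\gamma(a,v)}{g_a(a,v)}\exp\!\Bigl(-\int_0^a \frac{\lambda_{\gamma,\rho}+\gamma(s,v)}{g_a(s,v)}\,ds\Bigr)\rho(v)\,dv\,da,
\end{equation*}
which is the asserted implicit relation. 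Uniqueness of the eigenvalue then follows because the right-hand side, viewed as a function $G(\lambda)$, is continuous and strictly decreasing in $\lambda\geq 0$ (the $a$-integrand is, for fixed $v$, $f_{\gamma,v}(a):=\frac{\gamma}{g_a}e^{-\int_0^a\gamma/g_a}$ times $e^{-\lambda\int_0^a ds/g_a}$, and $\int_0^\infty f_{\gamma,v}(a)\,da=1$ by {\bf (d)}), with $G(0)=2>1$ and $G(\lambda)\to 0$ as $\lambda\to\infty$; the intermediate value theorem gives a unique positive root. The normalization $\iint_\Ss N_{\gamma,\rho}=1$ then determines $\kappa$ uniquely and positively.

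For the adjoint equation \eqref{4_eq:EDP_Psi}, I would again fix $v$ and solve the ODE in $a$. Set $c(v)=2\int_\Vv \phi_{\gamma,\rho}(0,v')\rho(v')\,dv'$ (a constant independent of $v$ because $\rho$ does not depend on $v$); then \eqref{4_eq:EDP_Psi} reads $g_a(a,v)\partial_a\phi_{\gamma,\rho}(a,v) = (\lambda_{\gamma,\rho}+\gamma(a,v))\phi_{\gamma,\rho}(a,v) - \gamma(a,v)c(v)$. The cleanest route is to guess the product form $\phi_{\gamma,\rho}(a,v) = \psi(a,v)/\bigl(g_a(a,v)N_{\gamma,\rho}(a,v)\bigr)$ and check, using the direct equation satisfied by $g_aN_{\gamma,\rho}$, that $\psi$ must satisfy $\partial_a\psi(a,v) = -\gamma(a,v)c(v)N_{\gamma,\rho}(a,v)$ (the $\lambda$ and $\gamma/g_a$ terms cancel against the direct equation); integrating from $a$ to $\infty$ and using that $g_aN_{\gamma,\rho}\to 0$ (hence $\psi$ must vanish at $\infty$ for $\phi$ to be bounded — here one uses {\bf (d)} again for the decay, and {\bf (c)} to guarantee $\int_a^\infty\gamma N_{\gamma,\rho}>0$ so that $\phi_{\gamma,\rho}>0$) yields $\psi(a,v)=c(v)\int_a^\infty \gamma(s,v)N_{\gamma,\rho}(s,v)\,ds$, i.e. the stated formula with $\kappa'$ absorbing $c(v)$ together with the normalization $\iint_\Ss N_{\gamma,\rho}\phi_{\gamma,\rho}=1$. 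One must still check consistency: that this $\phi_{\gamma,\rho}$ indeed produces the value of $c(v)$ assumed, i.e. that $c=2\int_\Vv\phi_{\gamma,\rho}(0,v')\rho(v')\,dv'$ is solvable; this is a single scalar equation linear in $\kappa'$, and the implicit eigenvalue relation is precisely what makes it consistent (up to the normalization), so $\kappa'$ exists and is positive, and $\phi_{\gamma,\rho}\in\Cc^1_b(\RR^+;\Cc_b(\Vv))$ by the same continuity and boundedness arguments as before.

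The main obstacle is the bookkeeping around the boundary/consistency conditions — verifying that cancelling $\rho(v')$ in the direct boundary condition is legitimate (handling the set where $\rho=0$), and checking that the constant $c(v)$ built into the adjoint ansatz is genuinely consistent with the formula it produces, rather than merely formally plausible; this is where the implicit eigenvalue relation and the positivity/integrability from Assumption~\ref{4_ass:age_basic}{\bf (c)}--{\bf (d)} have to be used carefully. Everything else is routine first-order linear ODE solving plus an intermediate-value-theorem monotonicity argument for uniqueness of $\lambda_{\gamma,\rho}$.
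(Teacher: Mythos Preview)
Your proposal is correct and follows essentially the same route as the paper: solve the first-order ODE in $a$ to obtain the explicit form of $N_{\gamma,\rho}$ and $\phi_{\gamma,\rho}$, then use the boundary condition to derive the implicit relation for $\lambda_{\gamma,\rho}$. The paper's own proof of this lemma is in fact only a one-line sketch (``one can easily check that $N_{\gamma,\rho}$ and $\phi_{\gamma,\rho}$ defined in such a way satisfy \eqref{4_eq:EDP_N} and \eqref{4_eq:EDP_Psi}''), with uniqueness deferred to the general Theorem~\ref{4_thm:ageeig_general}; your write-up simply supplies the details the paper omits. The one genuine difference is in how uniqueness is argued: you give a self-contained intermediate-value/monotonicity argument for the eigenvalue (which the paper uses elsewhere, in the proof of Theorem~\ref{4_thm:ageinfluence}), whereas the paper invokes the general Krein--Rutman machinery of Theorem~\ref{4_thm:ageeig_general}. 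Your direct argument is cleaner here and has the advantage of not requiring Assumption~\ref{4_ass:age_rho} on $\rho$, which the lemma does not assume but which Theorem~\ref{4_thm:ageeig_general} does.
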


\noindent For a proof, one can easily check that $N_{\gamma,\rho}$ and $\phi_{\gamma,\rho}$ defined in such a way satisfy respectively \eqref{4_eq:EDP_N} and \eqref{4_eq:EDP_Psi}. The uniqueness is guaranteed by Theorem~\ref{4_thm:ageeig_general} (see a proof in the appendix, Section \ref{sec:proofvp}).\\

First note that since $\rho_\a$ converges in distribution to the Dirac mass at point $\bar v$ as $\a \rightarrow 0$, we get the convergence of $\lambda_{B,\rho_\a}$ to $\lambda_{B,\bar v}$ as $\a \rightarrow 0$ using the characterisations \eqref{4_eq:age_lambda} and \eqref{4_eq:age_lambda_novar}.
In a first step, we aim at computing the first derivative of $\a \leadsto \lambda_{B,\rho_\a}$.

\begin{prop}[First derivative] \label{4_prop:age_d1}
Consider {\bf Model (A+V)} with Specifications \eqref{4_eq:gv}, \eqref{4_eq:vBa} and $\rho(v,dv') = \rho_\a(v')dv'$ defined by \eqref{4_eq:rhoalpha},
\begin{multline*}
\frac{d\lambda_{B,\rho_\a}}{d\a} = \Big(\iint_{\Ss} \frac{a}{\a (v - \bar v) + \bar v}  \exp\big(\frac{- \lambda_{B,\rho_\a} a}{\a (v - \bar v) + \bar v}\big)  \Psi_B(a) \rho(v)  dvda \Big)^{-1} \\ \times
\iint_{\Ss}  \frac{ \lambda_{B,\rho_\a} a (v - \bar v) }{(\a (v - \bar v) + \bar v)^2} \exp\big( \frac{- \lambda_{B,\rho_\a} a }{\a (v - \bar v) + \bar v} \big) \Psi_B(a) \rho(v)  dvda
\end{multline*}
for any $\a \in (0,1]$.
\end{prop}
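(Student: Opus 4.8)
The plan is to differentiate the implicit relation \eqref{4_eq:age_lambda} defining $\lambda_{B,\rho_\a}$ with respect to $\a$, treating $\lambda_{B,\rho_\a}$ as an implicit function of $\a$. Writing $v_\a := \a(v-\bar v) + \bar v$ for the change of variables $v \mapsto v_\a$ that sends the support $\Vv$ of the baseline density $\rho$ onto $\Vv_\a$, I would first substitute $w = v_\a$ in \eqref{4_eq:age_lambda} so that the defining relation becomes
\[
G(\a,\lambda) := 2 \iint_{\Ss} \exp\Big( - \frac{\lambda a}{\a(v-\bar v) + \bar v} \Big) f_B(a)\, \rho(v)\, dv\, da = 1,
\]
where now the domain of integration in $v$ is the fixed compact $\Vv$ and the dependence on $\a$ sits only in the exponent. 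This reformulation is the key simplifying move: it turns a relation with an $\a$-dependent measure $\rho_\a$ into one with a fixed measure $\rho$, so that standard differentiation under the integral sign applies directly.

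Next I would justify that $G$ is $C^1$ (indeed $C^2$, which will be needed for Theorem~\ref{4_thm:ageperturbation}) in $(\a,\lambda)$ on $(0,1]\times(0,\infty)$: the integrand and its partial derivatives in $\a$ and $\lambda$ are continuous, and since $\a(v-\bar v)+\bar v \ge \min_{\Vv} v_\a > 0$ stays bounded away from zero uniformly on compact $\a$-intervals, while $f_B$ is integrable (indeed a probability density when $a_{\max}=\infty$, and bounded by hypothesis otherwise) and the polynomial-times-exponential factors $\frac{\lambda a}{v_\a}$ are dominated, the dominated convergence theorem gives differentiability under the integral with the expected formulas. Then the implicit function theorem applies provided $\partial_\lambda G(\a,\lambda_{B,\rho_\a}) \neq 0$; but
\[
\partial_\lambda G(\a,\lambda) = -2 \iint_{\Ss} \frac{a}{\a(v-\bar v)+\bar v}\, \exp\Big( - \frac{\lambda a}{\a(v-\bar v)+\bar v} \Big) f_B(a)\, \rho(v)\, dv\, da < 0
\]
strictly, since the integrand is nonnegative and strictly positive on a set of positive measure (recall $f_B > 0$ on the support of $B$ by Assumption~\ref{4_ass:age_basic}(c), and $a/v_\a > 0$ for $a > 0$). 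Hence $\a \leadsto \lambda_{B,\rho_\a}$ is differentiable and
\[
\frac{d\lambda_{B,\rho_\a}}{d\a} = - \frac{\partial_\a G(\a,\lambda_{B,\rho_\a})}{\partial_\lambda G(\a,\lambda_{B,\rho_\a})}.
\]

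Finally I would compute $\partial_\a G$ explicitly: differentiating the exponent $-\lambda a / (\a(v-\bar v)+\bar v)$ in $\a$ produces the factor $\lambda a (v-\bar v)/(\a(v-\bar v)+\bar v)^2$, so
\[
\partial_\a G(\a,\lambda) = 2 \iint_{\Ss} \frac{\lambda a (v-\bar v)}{(\a(v-\bar v)+\bar v)^2}\, \exp\Big( - \frac{\lambda a}{\a(v-\bar v)+\bar v} \Big) f_B(a)\, \rho(v)\, dv\, da.
\]
Plugging these two expressions (and cancelling the common factor $2$) into the ratio above yields exactly the formula in Proposition~\ref{4_prop:age_d1}. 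The only genuinely delicate point is the analytic justification of differentiation under the integral sign, and in particular securing a dominating function uniform in $\a$ near whatever value is considered — here one uses that $v_\a$ is bounded below by a positive constant on any compact $\a$-interval inside $(0,1]$, together with boundedness of $f_B$ (or its being a density) and the fact that $t^k e^{-t}$ is bounded on $[0,\infty)$; everything else is routine calculus. As a remark, evaluating the resulting expression at $\a = 0$ and using $\int_\Vv (v-\bar v)\rho(v)\,dv = 0$ gives $\frac{d\lambda_{B,\rho_\a}}{d\a}\big|_{\a=0} = 0$, which is the input needed for the second-order expansion in Theorem~\ref{4_thm:ageperturbation}.
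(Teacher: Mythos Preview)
Your proof is correct and reaches the same formula, but by a genuinely different and more elementary route than the paper. The paper argues via spectral perturbation: it introduces the operator $\Aa_\a$ acting on test functions and its dual $\Aa_\a^*$, and uses the identity
\[
(\lambda_\a - \lambda_{\a-\ep}) \iint_{\Ss_{\a-\ep}} \phi_\a N_{\a-\ep}
= \iint_{\Ss_{\a-\ep}} (\Aa_\a \phi_\a - \Aa_{\a-\ep} \phi_\a) N_{\a-\ep},
\]
obtained from duality and the eigenrelations $\Aa_\a \phi_\a = \lambda_\a \phi_\a$, $\Aa_{\a-\ep}^* N_{\a-\ep} = \lambda_{\a-\ep} N_{\a-\ep}$ (cf.\ Michel \cite{4_Michel06}, Lemma~3.2). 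It then inserts the explicit formulas for $\phi_\a$ and $N_\a$ from Lemma~\ref{4_lem:ageeigen}, lets $\ep\to 0$, and finally performs the same change of variables $w=\a(v-\bar v)+\bar v$ that you carry out at the very start. By contrast, you bypass the eigenvector machinery entirely: you use only the scalar characterisation \eqref{4_eq:age_lambda} and the implicit function theorem, which is all that is really needed here since Lemma~\ref{4_lem:ageeigen} has already reduced the eigenproblem to a single equation. Your approach is shorter and more transparent for this specific model; the paper's approach has the virtue of being a general spectral-perturbation template that would still make sense in settings where no explicit scalar equation for the eigenvalue is available (for instance with heredity, $\rho(v,v')$ general). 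Both deliver the same derivative, and your remark that $\partial_\lambda G<0$ strictly is exactly what secures applicability of the implicit function theorem.
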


\begin{proof}[Proof]
Introduce the operator
\begin{align} \label{4_eq:adjointage}
\Aa_{\gamma,\rho} f(a,v) & = g_a(a,v) \frac{\partial}{\partial a} f(a,v) + \gamma(a,v) \Big( 2 \int_{\Vv} f(0,v') \rho(v') dv' - f(a,v) \Big)
\end{align}
densely defined on bounded continuous functions and let $\Aa_{\gamma,\rho}^*$ be its dual operator. The eigenproblem given by \eqref{4_eq:EDP_N} and \eqref{4_eq:EDP_Psi} can be written more shortly
$$
\Aa_{\gamma,\rho}^* N_{\gamma,\rho} = \lambda_{\gamma,\rho} N_{\gamma,\rho} , \quad \Aa_{\gamma,\rho} \phi_{\gamma,\rho} = \lambda_{\gamma,\rho} \phi_{\gamma,\rho} ,
$$
with $N_{\gamma,\rho}\geq 0$ such that $\iint_\Ss N_{\gamma,\rho} = 1$ and $\phi_{\gamma,\rho}\geq0$ such that $\iint_\Ss N_{\gamma,\rho}\phi_{\gamma,\rho} = 1$. In order to ease notation, when no confusion is possible, we abbreviate $\Aa_{\gamma,\rho_\a}$ by $\Aa_\a$, $N_{\gamma,\rho_\a}$ by $N_\a$, $\lambda_{\gamma,\rho_\a}$ by $\lambda_\a$ and so on.  We denote the support of $\rho_\a$ by $\Vv_\a$ and $[0,\infty)\times\Vv_\a$ by $\Ss_\a$.

\vip
\noindent {\it Step 1.}
Let $\a \in (0,1]$ be fixed. For $0<\ep<\a$, we claim that
\begin{equation} \label{4_eq:lemmeMichel}
 (\lambda_\a - \lambda_{\a-\ep} ) \iint_{\Ss_{\a-\ep}} \phi_{\a} N_{\a-\ep}  =   \iint_{\Ss_{\a-\ep}} (  \Aa_\a \phi_{\a} -  \Aa_{\a-\ep} \phi_{\a})  N_{\a-\ep}  .
\end{equation}
Indeed, operators $\Aa_\a$ and $\Aa^*_\a$ are dual, then
$$
\iint_{\Ss_{\a-\ep}} (  \Aa_\a \phi_{\a} -  \Aa_{\a-\ep} \phi_{\a}) N_{\a-\ep}  
 =  \iint_{\Ss_{\a-\ep}}   (\Aa_\a \phi_{\a})  N_{\a-\ep}  - \phi_{\a}   (\Aa_{\a-\ep}^* N_{\a-\ep} )
$$
which leads to \eqref{4_eq:lemmeMichel} since $\Aa_\a \phi_\a = \lambda_\a \phi_\a$  and $\Aa_{\a-\ep}^* N_{\a-\ep} = \lambda_{\a-\ep} N_{\a-\ep}$ (see also \cite{4_Michel06}, Lemma 3.2 and Equation (3.11)).
Using the definition \eqref{4_eq:adjointage} of  $\Aa_\a$, we get
\begin{equation*}
 \Aa_\a \phi_{\a}(a,v) - \Aa_{\a-\ep} \phi_{\a}(a,v) = 2 \gamma(a,v)  \Big( \int_{\Vv_{\a}} \phi_{\a} (0,v') \rho_{\a}(v') dv' - \int_{\Vv_{\a-\ep}} \phi_{\a} (0,v') \rho_{\a-\ep}(v') dv' \Big),
\end{equation*}
which we insert in \eqref{4_eq:lemmeMichel} to obtain
\begin{multline*}
\frac{\lambda_\a - \lambda_{\a-\ep}}{\ep} = \frac{\iint_{\Ss_{\a-\ep}} 2\gamma(a,v) N_{\a-\ep}(a,v) dv da}{ \iint_{\Ss_{\a-\ep}} \phi_\a(a,v) N_{\a-\ep}(a,v)dvda} \\ \times \Big( \frac{  \int_{\Vv_{\a}} f(0,v') \rho_{\a}(v') dv' - \int_{\Vv_{\a-\ep}} f(0,v') \rho_{\a-\ep}(v') dv' }{\ep} \Big)_{|f = \phi_\a}
\end{multline*}
 and we let $\ep$ go to zero to study the differentiability on the left of $\a \leadsto \lambda_\a$ at point $\a$. In the same way we compute $\ep^{-1}(\lambda_{\a+\ep} - \lambda_\a)$ to study the differentiability on the right of $\a \leadsto \lambda_\a$ at point $\a$. Since $N_{\a \pm \ep} \rightarrow N_{\a}$ pointwise when $\ep \rightarrow 0$ and $\iint_{\Ss_\a} \phi_\a N_\a = 1$, by Lebesgue dominated convergence theorem we get
\begin{equation} \label{4_eq:aged1formule1}
\frac{d\lambda_\a}{d\a}  = \kappa_\a \frac{d}{d \a} \Big( \int_{\Vv_{\a}}  f(0,v') \rho_{\a}(v') dv' \Big)_{\big| f = \phi_{\a}}
\end{equation}
with $\iint_{\Ss_\a} 2 \gamma(a,v) N_\a (a,v) dvda = \kappa_\a $ as defined in Lemma~\ref{4_lem:ageeigen}.
Thus, $\lambda_\a$ is differentiable in $\a$ if the derivative of the right-hand side exists.

\vip
\noindent {\it Step 2.} The adjoint eigenvector $\phi_\a$ has an explicit expression we exploit now.
For $\beta \in (0,1]$, recalling the explicit expression of $\phi_\beta$ given by Lemma~\ref{4_lem:ageeigen}, 
\begin{align*}
\phi_{\beta}(0,v) =  \kappa'_\beta \int_{0}^\infty \frac{\gamma(a,v)}{g_a(a,v)} \exp \big( - \int_0^a \frac{\lambda_{\beta} + \gamma(s,v)}{g_a(s,v)} ds \big) da, \quad v \in \Vv_\beta
\end{align*}
with $\kappa'_\beta$ defined in Lemma~\ref{4_lem:ageeigen}.
Thus \eqref{4_eq:aged1formule1} becomes
$$
\frac{d\lambda_\a}{d\a} = \kappa_\a \kappa'_\a \int_0^\infty \frac{d}{d\a} \Big( \int_{\Vv_\a} \frac{\gamma(a,v)}{g_a(a,v)} \exp \big( - \int_0^a \frac{\lambda_{\beta} + \gamma(s,v)}{g_a(s,v)} ds \big) \rho_\a(v) dv \Big)_{|\beta=\a}  da.
$$
Before going ahead in computations, recall Specifications \eqref{4_eq:gv}, $g_a(a,v) = v$, and  \eqref{4_eq:vBa}, $\gamma(a,v) = v B(a)$. The previous equality boils down to
\begin{equation} \label{4_eq:age_d1allrho}
\frac{d\lambda_\a}{d\a} = \bar \kappa_\a \int_0^\infty \frac{d}{d\a} \Big( \int_{\Vv_\a} \exp\big(\frac{- \lambda_{\beta}a}{v}\big) \rho_\a(v) dv \Big)_{|\beta=\a}  \Psi_B(a)  da
\end{equation}
with $\Psi_B$ defined by \eqref{4_eq:fB} and $\bar \kappa_\a = \kappa_\a \kappa'_\a$ equal to
\begin{equation} \label{4_eq:cste}
\bar \kappa_\a = \Big(\iint_{\Ss_\a} \frac{a}{v}  \exp(-\frac{\lambda_\a a}{v})  \Psi_B(a) \rho_\a(v)  dvda \Big)^{-1},
\end{equation}
using $\iint_{\Ss_\a} \phi_\a N_\a = 1$.

\vip
\noindent {\it Step 3.} 
In the case of the kernel \eqref{4_eq:rhoalpha}, we can explicitly compute the derivative with respect to $\a$. After a change of variables (setting $\a^{-1}(v - \bar v (1-\a))$ as new variable), \eqref{4_eq:age_d1allrho} reads
$$
\frac{d\lambda_\a}{d\a} = \bar \kappa_\a \int_{0}^\infty  \frac{d}{d \a} \Big( \int_{\Vv}  \exp\big( - \frac{\lambda_{\beta} a  }{\a (v - \bar v) + \bar v} \big) \rho(v) dv \Big)_{| \beta = \a} \Psi_B(a) da.
$$
Inverting the derivative in $\a$ and the integral and computing the derivative with respect to~$\a$, we get the announced result, recalling the definition \eqref{4_eq:cste} of $\bar \kappa_\a$.
\end{proof}

As a corollary of Proposition~\ref{4_prop:age_d1}, for all division rate $B$, when $\a$ goes to zero, the first derivative is null,
\begin{equation} \label{4_eq:d1nulle}
 \frac{d\lambda_{B,\rho_\a}}{d\a}\Big|_{\a = 0} = \lim_{\a \rightarrow 0} \frac{d\lambda_{B,\rho_\a}}{d\a} = 0
\end{equation}
since $\int_\Vv (v-\bar v) \rho(v) dv = 0$ (we picked a baseline density $\rho$ with mean $\bar v$). 
So we compute the second derivative when $\a$ converges to zero.

\begin{prop}[Second derivative at point $0$]  \label{4_prop:age_d2}
Consider {\bf Model (A+V)} with Specifications \eqref{4_eq:gv}, \eqref{4_eq:vBa} and $\rho(v,dv') = \rho_\a(v')dv'$ defined by \eqref{4_eq:rhoalpha},
$$
 \frac{d^2 \lambda_{B,\rho_\a}}{d\a^2} \Big|_{\a=0} = 
 \sigma^2
 \Big( \int_0^{\infty} \frac{a}{\bar v} e^{ - \frac{\lambda_{B,\bar v} a }{\bar v} } \Psi_B(a) da \Big)^{-1} 
 \int_0^\infty \frac{\lambda_{B,\bar v} a}{\bar v} \Big( \frac{\lambda_{B,\bar v} a}{\bar v} - 2  \Big) e^{- \frac{\lambda_{B,\bar v} a}{\bar v}} \Psi_B(a) da
$$
with $\sigma^2 = \int_\Vv (v - \bar v)^2 \rho(v) dv.$
\end{prop}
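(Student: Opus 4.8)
The plan is to differentiate the formula for $d\lambda_\a/d\a$ obtained in Proposition~\ref{4_prop:age_d1} once more with respect to $\a$ and then let $\a\to 0$. Write, for $\a\in(0,1]$,
$$
\frac{d\lambda_\a}{d\a} = \bar\kappa_\a\, I(\a),\qquad
I(\a) = \int_0^\infty \frac{d}{d\a}\Big(\int_\Vv \exp\big(-\tfrac{\lambda_\beta a}{\a(v-\bar v)+\bar v}\big)\rho(v)\,dv\Big)_{|\beta=\a} f_B(a)\,da,
$$
with $\bar\kappa_\a$ given by \eqref{4_eq:cste}. The first step is to make the inner $\a$-derivative explicit: carrying out the differentiation before setting $\beta=\a$ separates the contribution of the explicit $\a$-dependence (through the denominator $\a(v-\bar v)+\bar v$) from the contribution of $\lambda_\beta$ through the chain rule. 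At $\a=0$ the factor multiplying $\tfrac{d\lambda_\beta}{d\beta}\big|_{\beta=0}$ is finite, and since \eqref{4_eq:d1nulle} gives $\tfrac{d\lambda_{B,\rho_\a}}{d\a}\big|_{\a=0}=0$, that whole term drops out; what survives is
$$
I(0) = \int_0^\infty \Big(\int_\Vv \frac{\lambda_{B,\bar v}\,a\,(v-\bar v)}{\bar v^{\,2}}\,e^{-\lambda_{B,\bar v} a/\bar v}\rho(v)\,dv\Big) f_B(a)\,da = 0,
$$
again because $\int_\Vv(v-\bar v)\rho(v)\,dv=0$. So $I(\a)\to 0$ as $\a\to 0$, and we cannot just plug $\a=0$ into $\bar\kappa_\a I(\a)$; we must differentiate the product.

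Since $\frac{d^2\lambda_\a}{d\a^2} = \bar\kappa_\a' I(\a) + \bar\kappa_\a I'(\a)$ and $I(0)=0$, the first term vanishes at $\a=0$ (one checks $\bar\kappa_\a$ is differentiable at $0$, with $\bar\kappa_0 = \big(\int_0^\infty \tfrac{a}{\bar v}e^{-\lambda_{B,\bar v} a/\bar v} f_B(a)\,da\big)^{-1}$, using the $\a=0$ form of \eqref{4_eq:cste} and the already-established continuity of $\a\leadsto\lambda_\a$). Hence
$$
\frac{d^2\lambda_{B,\rho_\a}}{d\a^2}\Big|_{\a=0} = \bar\kappa_0\, I'(0).
$$
The core computation is therefore $I'(0)$: differentiate $I(\a)$ once more in $\a$, under the integral sign. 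Each $\a$-derivative of $\exp\big(-\lambda_\beta a/(\a(v-\bar v)+\bar v)\big)$ brings down a factor proportional to $(v-\bar v)$; after setting $\a=0$ the terms that are linear in $(v-\bar v)$ integrate to zero against $\rho$, and the surviving terms are quadratic in $(v-\bar v)$, producing the factor $\sigma^2 = \int_\Vv(v-\bar v)^2\rho(v)\,dv$. Collecting these quadratic terms (there is one from differentiating the denominator twice and one from squaring a single differentiation) yields exactly
$$
I'(0) = \sigma^2 \int_0^\infty \frac{\lambda_{B,\bar v}\,a}{\bar v}\Big(\frac{\lambda_{B,\bar v}\,a}{\bar v}-2\Big) e^{-\lambda_{B,\bar v} a/\bar v} f_B(a)\,da,
$$
which, multiplied by $\bar\kappa_0$, is the claimed expression. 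Twice differentiability of $\a\leadsto\lambda_{B,\rho_\a}$ on a neighbourhood of $0$ follows because Proposition~\ref{4_prop:age_d1} already gives an explicit $C^1$ formula for $\tfrac{d\lambda_\a}{d\a}$ whose right-hand side is itself differentiable in $\a$ (the integrands and their $\a$-derivatives are dominated uniformly in $\a$ on compact subsets of $[0,1]$ by integrable functions of $a$, using $a_{\min}(v)>0$ is not needed here but boundedness of $f_B$ and the exponential decay suffice), so Leibniz differentiation under the integral is justified and $\lambda_{B,\rho_\a}$ is $C^2$ near $0$. Finally, the Taylor expansion in the statement is just the second-order Taylor formula with the order-one coefficient killed by \eqref{4_eq:d1nulle}.

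\textbf{Main obstacle.} The delicate point is the bookkeeping in computing $I'(0)$: one must differentiate a nested expression (an $a$-integral of $f_B$ against a $v$-integral of an exponential whose exponent depends on $\a$ both explicitly and through $\lambda_\beta$) twice in $\a$, interchange all derivatives and integrals with dominated-convergence justifications, and then carefully discard every term that is odd in $(v-\bar v)$ — including, crucially, the cross term involving $\tfrac{d\lambda_\beta}{d\beta}\big|_0$, which vanishes only because of \eqref{4_eq:d1nulle} rather than by parity. Keeping track of which of the several generated terms survive the limit $\a\to 0$, and verifying that the mixed $\lambda'$-terms genuinely drop, is where the proof needs care; the remaining algebra (and the optional integration by parts turning $\tfrac{\lambda a}{\bar v}(\tfrac{\lambda a}{\bar v}-2)e^{-\lambda a/\bar v}$ into $\tfrac{\lambda}{\bar v}a^2 e^{-\lambda a/\bar v} f_B'/f_B$ as in the Remark) is routine.
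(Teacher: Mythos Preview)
Your proposal is correct and follows essentially the same route as the paper: write $\tfrac{d\lambda_\a}{d\a}=D_1(\a)^{-1}D_2(\a)$ (your $\bar\kappa_\a I(\a)$), differentiate the product, kill the $\bar\kappa_\a'\,I(\a)$ term because $I(0)=D_2(0)=0$, and in $I'(0)$ drop the $\partial_\a$--through--$\lambda_\a$ contribution using \eqref{4_eq:d1nulle}, leaving only the $(v-\bar v)^2\,\partial_u\Lambda_2$ piece that produces $\sigma^2$. The paper packages the computation via the auxiliary functions $\Lambda_1,\Lambda_2$ and their partial derivatives, but the structure and each cancellation you identify match the paper's Steps~1--2 exactly.
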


\begin{proof}[Proof]
For $\alpha \in (0,1]$, we set
$$
\Lambda_1(\a,a,u)  = \frac{a}{u} \exp\big( - \frac{\lambda_{B,\rho_\a} a }{u} \big), \quad \Lambda_2(\a,a,u)  = \frac{\lambda_{B,\rho_\a} a}{u^2} \exp\big( - \frac{\lambda_{B,\rho_\a} a }{u} \big),
$$
so that Proposition~\ref{4_prop:age_d1} reads
\begin{multline*}
\frac{d\lambda_{B,\rho_\a}}{d\a} = \Big( \iint_{\Ss} \Lambda_1(\a,a,\a(v-\bar v) + \bar v)  \Psi_B(a) \rho(v) dv da \Big)^{-1} \\ \times \iint_{\Ss} (v - \bar v) \Lambda_2(\a,a,\a(v-\bar v) + \bar v)  \Psi_B(a) \rho(v) dv da = D_1(\a)^{-1} D_2(\a)
\end{multline*}
say.
Then, $\frac{d^2\lambda_{B,\rho_\a}}{d\a^2}$ the second derivative of $\a \leadsto \lambda_{B,\rho_\a}$ can be written
\begin{multline*}
D_2(\a) D_1(\a)^{-2}  \iint_{\Ss}  \Big( \frac{\p \Lambda_1}{\p\a}(\a,a,\a(v-\bar v) + \bar v) + (v - \bar v)  \frac{\p \Lambda_1}{\p u}(\a, a,\a(v-\bar v) + \bar v) \Big) \Psi_B(a) \rho(v) dv da \\
+ D_1(\a)^{-1} \iint_{\Ss}  \Big( (v - \bar v) \frac{\p \Lambda_2}{\p\a}(\a,a,\a(v-\bar v) + \bar v) + (v - \bar v)^2  \frac{\p \Lambda_2}{\p u}(\a, a,\a(v-\bar v) + \bar v) \Big) \Psi_B(a) \rho(v) dv da.
\end{multline*}

We claim that the first term converges to zero as $\a \rightarrow 0$ and that 
\begin{equation} \label{4_dFdalpha}
\frac{\p \Lambda_2}{\p \a}(\a,a,\a(v-\bar v) + \bar v) \rightarrow 0,  \quad {\rm as} \;\; \a \rightarrow 0,
\end{equation}
so that,
\begin{equation} \label{4_eq: age_d2 interm}
\frac{d^2\lambda_{B,\rho_\a}}{d\a^2} \Big|_{\a=0}=  \big( \lim_{\a \rightarrow 0} D_1(\a) \big)^{-1} \iint_{\Ss} (v - \bar v)^2 \lim_{\a \rightarrow 0} \frac{\p \Lambda_2}{\p u}(\a, a,\bar v)  \Psi_B(a) \rho(v) dvda,
\end{equation}
since $\lim_{\a \rightarrow 0} \frac{\p \Lambda_2}{\partial u}(\a,a,\a(v - \bar v) + \bar v) = \lim_{\a \rightarrow 0} \frac{\p \Lambda_2}{\partial u}(\a,a,\bar v) $ by continuity of $u\leadsto \frac{\p \Lambda_2}{\partial u}(\a,a,u)$.\\

\noindent {\it Step 1}. 
We first treat the second term of $\frac{d^2\lambda_{B,\rho_\a}}{d\a^2}$, with three ingredients. 
{\bf 1)} 
In order to check \eqref{4_dFdalpha}, let us compute
$$
 \frac{\p \Lambda_2}{\p\a}(\a,a,u) = \Big( \frac{\frac{d\lambda_{B,\rho_\a}}{d\a} a}{u^2} - \frac{\lambda_{B,\rho_\a} a }{u^2} \frac{\frac{d\lambda_{B,\rho_\a}}{d\a} a}{u}  \Big)  e^{- \frac{\lambda_{B,\rho_\a} a}{u} }.
$$
As $\a \rightarrow 0$, $\lambda_{B,\rho_\a}$ converges to $\lambda_{B,\bar v}$ defined by \eqref{4_eq:age_lambda_novar}  and $\tfrac{d\lambda_{B,\rho_\a}}{d\a}$ to  $0$ (recall \eqref{4_eq:d1nulle}), thus $ \frac{\p \Lambda_2}{\p\a}(\a,a,u) \rightarrow 0$. Since $u \leadsto  \frac{\p \Lambda_2}{\p\a}(\a,a,u)$ is continuous, we deduce \eqref{4_dFdalpha}. 
{\bf 2)} Let us now compute
$$
\frac{\p \Lambda_2}{\p u}(\a, a,u) = \big( \frac{-2 \lambda_{B,\rho_\a} a }{u^3} + \frac{( \lambda_{B,\rho_\a} a )^2}{u^4} \big) e^{- \frac{\lambda_{B,\rho_\a} a}{u} },
$$
which leads to  
\begin{equation} \label{4_lim_dFdv}
\lim_{\a \rightarrow 0} \frac{\p \Lambda_2}{\p u}(\a, a,\bar v) = \frac{\lambda_{B,\bar v} a}{\bar v^3} \Big( \frac{\lambda_{B,\bar v} a}{\bar v} - 2  \Big)  e^{- \frac{\lambda_{B,\bar v} a}{\bar v}}
\end{equation}
since $\lambda_{B,\rho_\a}$ converges to $\lambda_{B,\bar v}$ defined by \eqref{4_eq:age_lambda_novar} as $\a \rightarrow 0$.
{\bf 3)} Since $ \Lambda_1(\a,a,\a(v-\bar v) + \bar v)$ converges to $\Lambda_1(0,a,\bar v)$ as $\a \rightarrow 0$, we get 
\begin{equation} \label{4_eq;lim_D1}
\lim_{\a \rightarrow 0} D_1(\a) =  \int_{0}^\infty \frac{a}{\bar v} \exp\big( \frac{- \lambda_{B,\bar v} a }{\bar v} \big) \Psi_B(a) da > 0
\end{equation}
using $\int_\Vv \rho(v)dv =1$.
Gathering \eqref{4_lim_dFdv} and \eqref{4_eq;lim_D1} enables us to compute the right-hand side of \eqref{4_eq: age_d2 interm}.

\vip
\noindent {\it Step 2}. We now check that the first term of $\frac{d^2\lambda_{B,\rho_\a}}{d\a^2}$ converges to zero as $\a \rightarrow 0$. One readily checks that 
$$
{\rm {\bf 1)}}\; \frac{\p \Lambda_1}{\p \a}(\a,a,\a(u'-\bar v) + \bar v) \rightarrow 0,  \quad {\rm as} \;\; \a \rightarrow 0, \quad {\rm {\bf 2)}} \; \lim_{\a \rightarrow 0} \frac{\p \Lambda_1}{\partial u}(\a,a,\a(v - \bar v) + \bar v)  < \infty.
$$
We know in addition that {\bf 3)} $\lim_{\a \rightarrow 0} D_1(\a) >0$ (see \eqref{4_eq;lim_D1}) and that {\bf 4)} $\lim_{\a \rightarrow 0} D_2(\a) = 0$ since $\int_\Vv (v-\bar v)\rho(v) dv = 0$. Gathering the four points enables us to conclude.
\end{proof}

\begin{proof}[Proof of Theorem~\ref{4_thm:ageperturbation}] 
For $\a\in[0,1)$,
$$
\lambda_{B,\rho_\a} = \lambda_{B,\bar v} + \a \frac{d\lambda_{B,\rho_\a}}{d\a}\Big|_{\a = 0}  + \frac{\a^2}{2} \frac{d^2 \lambda_{B,\rho_\a}}{d\a^2}\Big|_{\a = 0}  + o(\a^2)
$$
and we use Proposition~\ref{4_prop:age_d1} (or more precisely \eqref{4_eq:d1nulle}) and Proposition~\ref{4_prop:age_d2} to get the final result.
\end{proof}

\section{Supplementary figures and tables} \label{4_sec:appendix}

In the following tables, the variability kernel is defined by \eqref{4_eq:rhoalpha2} and \eqref{4_eq:rho}.\\

\noindent {\it Supplementary to Figure \ref{4_fig:SizeVar_lambdaalpha}.}
{\small
\begin{table}[h!]
\centering
\begin{tabular}{ccccc}
\hline \hline
 &$\boldsymbol{CV_{\rho_\a} = 5\%}$                                     & $\boldsymbol{CV_{\rho_\a} = 10\%}$                                     & $\boldsymbol{CV_{\rho_\a} = 15\%}$                                        & $\boldsymbol{CV_{\rho_\a} = 20\%}$                                          \\
$\boldsymbol{T}$                   & {\it 10.5}                                             & {\it 11}                                               & {\it 11.25}                                             & {\it 11.5}                                              \\
$\boldsymbol{\underset{{\rm (Min.}\leq\cdot\leq {\rm Max.)}}{{\rm Mean}}}\boldsymbol{|\partial \Tt_T|}$                 & $\underset{(42~358\leq\cdot\leq52~147)}{46~837}$ & $\underset{(57~254\leq\cdot\leq87~282)}{73~100}$ & $\underset{(53~615\leq\cdot\leq116~052)}{90~027}$ & $\underset{(68~946\leq\cdot\leq128~379)}{98~270}$  \\
$\boldsymbol{\underset{{\rm (sd.)}}{{\rm Mean}}} \, \boldsymbol{\widehat \lambda_T}$            & $\underset{(0.0006)}{0.9985}$                    & $\underset{(0.0009)}{0.9938}$                    & $\underset{(0.0014)}{0.9867}$                     & $\underset{(0.0018)}{0.9757}$                     \\
{\bf 95\% CI}         & $[0.9974,0.9999]$                                & $[0.9923,0.9954]$                                & $[0.9841,0.9893]$                                 & $[0.9717,0.9789]$                                                     \\ \hline \hline  

$\boldsymbol{CV_{\rho_\a} = 25\%}$                                         & $\boldsymbol{CV_{\rho_\a} = 30\%}$                                                & $\boldsymbol{CV_{\rho_\a} = 35\%}$                                            & $\boldsymbol{CV_{\rho_\a} = 40\%}$                                             & $\boldsymbol{CV_{\rho_\a} = 45\%}$                                             \\
{\it 11.75}                                              & {\it 12}                                                 & {\it 12.25}                                              & {\it 12.5}                                               & {\it 13}                                               \\
$\underset{(71~884\leq\cdot\leq157~032)}{107~305}$ & $\underset{(63~409\leq\cdot\leq200~860)}{120~102}$ & $\underset{(52~116\leq\cdot\leq172~328)}{104~628}$ & $\underset{(28~171\leq\cdot\leq192~021)}{117~208}$ &  $\underset{(39~238\leq\cdot\leq238~181)}{114~180}$ \\
 $\underset{(0.0019)}{0.9617}$                      & $\underset{(0.0027)}{0.9450}$                      & $\underset{(0.0035)}{0.9245}$                      & $\underset{(0.0030)}{0.8985}$                      & $\underset{(0.0039)}{0.8722}$                 \\
 $[0.9583,0.9656]$                                  & $[0.9397,0.9505]$                                  & $[0.9178,0.9312]$                                  & $[0.8920,0.9036]$                                  & $[0.8650,0.8794]$           \\ \hline \hline  
 
\end{tabular}
\caption{{\it {\bf Model (S+V).} Division rate $\gamma(x,v) = vxB(x)$ with $B(x) = (x-1)^2 {\bf 1}_{\{x \geq 1\}}$. Estimation of the Malthus parameter $\lambda_{B,\rho_\a}$ (mean and 95\% confidence interval based on $M=50$ Monte Carlo continuous time trees simulated up to time $T$) with respect to the coefficient of variation of the growth rates density $\rho_\a$ with mean $\bar v = 1$.
Reference (all cells grow at a rate $\bar v=1$): $\lambda_{B,\bar v} = 1$.
} \label{4_tab:size}}
\end{table}
}

\newpage

\noindent {\it Robustness of our results: division rate change.}
{\small
\begin{table}[h!]
\centering
\begin{tabular}{ccccc}
\hline \hline
&$\boldsymbol{CV_{\rho_\a} = 5\%}$                                     
& $\boldsymbol{CV_{\rho_\a} = 10\%}$                                     
& $\boldsymbol{CV_{\rho_\a} = 15\%}$                                        
& $\boldsymbol{CV_{\rho_\a} = 20\%}$		 				 	\\
$\boldsymbol{T}$                   
& {\it 10.5}                                             
& {\it 11}                                               
& {\it 11.25}                                             
& {\it 11.5}                                           			 				  	\\
$\boldsymbol{\underset{{\rm (Min.}\leq\cdot\leq {\rm Max.)}}{{\rm Mean}}}\boldsymbol{|\partial \Tt_T|}$               
& $\underset{(41~357 \leq \cdot \leq 53~270)}{47~160}$                        
& $\underset{(61~758 \leq \cdot \leq 84~191)}{73~670}$                        
& $\underset{(66~499 \leq \cdot \leq 118~486)}{86~410}$                         
& $\underset{(61~924 \leq \cdot \leq 127~299)}{95~230}$               				      		\\    
$\boldsymbol{\underset{{\rm (sd.)}}{{\rm Mean}}} \, \boldsymbol{\widehat \lambda_T}$           
& $\underset{(0.0005)}{0.9984}$ 
& $\underset{(0.0009)}{0.9934}$ 
& $\underset{(0.0012)}{0.9855}$  
& $\underset{(0.0015)}{0.9732}$ 		 									\\
{\bf 95\% CI}   
& $[ 0.9975 , 0.9995 ]$           
& $[ 0.9918 , 0.9952 ]$             
& $[ 0.9833 , 0.9876 ]$             
& $[ 0.9705 , 0.9763 ]$               											\\ \hline \hline
$\boldsymbol{CV_{\rho_\a} = 25\%}$                                         
& $\boldsymbol{CV_{\rho_\a} = 30\%}$                                                
& $\boldsymbol{CV_{\rho_\a} = 35\%}$                                            
& $\boldsymbol{CV_{\rho_\a} = 40\%}$                                             
& $\boldsymbol{CV_{\rho_\a} = 45\%}$                                             		\\
{\it 11.75}                                              
& {\it 12}                                                 
& {\it 12.25}                                              
& {\it 12.5}                                               
& {\it 13}                                           							    	\\
$\underset{(53~902 \leq \cdot \leq 156~868)}{104~480}$                    
& $\underset{(53~156 \leq \cdot \leq 145~125)}{107~540}$                        
& $\underset{(50~784 \leq \cdot \leq 162~615)}{100~480}$                         
& $\underset{(42~533 \leq \cdot \leq 192~984)}{90~440}$                      
& $\underset{(22~600 \leq \cdot \leq 200~034)}{102~880}$          				          	\\
$\underset{(0.0019)}{0.9589}$  
& $\underset{(0.0023)}{0.9384}$ 
& $\underset{(0.0025)}{0.9166}$  
& $\underset{(0.0036)}{0.8890}$  
& $\underset{(0.0044)}{0.8597}$  											\\
$[ 0.9554 , 0.9628 ]$             
& $[ 0.9332 , 0.9426 ]$             
& $[ 0.9113 , 0.9214 ]$             
& $[ 0.8820 , 0.8945 ]$            
& $[ 0.8489 , 0.8655 ]$         												\\ \hline \hline 
\end{tabular}

\caption{{\it {\bf Model (S+V).} Division rate $\gamma(x,v) = vxB(x)$ with $B(x) = (x-1)^8 {\bf 1}_{\{x \geq 1\}}$. Estimation of the Malthus parameter $\lambda_{B,\rho_\a}$ (mean and 95\% confidence interval based on $M=50$ Monte Carlo continuous time trees simulated up to time $T$) with respect to the coefficient of variation of the growth rates density $\rho_\a$ with mean $\bar v = 1$.
Reference (all cells grow at a rate $\bar v=1$):  $\lambda_{B,\bar v} = 1$.
} \label{4_tab:otherB}}
\end{table}
}


\noindent {\it Robustness of our results: asymmetric division.}

{\small
\begin{table}[h!]
\centering
\begin{tabular}{ccccc}
\hline \hline
&$\boldsymbol{CV_{\rho_\a} = 5\%}$                                     
& $\boldsymbol{CV_{\rho_\a} = 10\%}$                                     
& $\boldsymbol{CV_{\rho_\a} = 15\%}$                                        
& $\boldsymbol{CV_{\rho_\a} = 20\%}$		 				 	\\
$\boldsymbol{T}$                   
& {\it 10.5}                                             
& {\it 11}                                               
& {\it 11.25}                                             
& {\it 11.5}                                           			 				  	\\
$\boldsymbol{\underset{{\rm (Min.}\leq\cdot\leq {\rm Max.)}}{{\rm Mean}}}\boldsymbol{|\partial \Tt_T|}$   
& $\underset{(49~880 \leq \cdot \leq 59~486)}{53~590}$                        
& $\underset{(69~343 \leq \cdot \leq 97~237)}{85~310}$                        
& $\underset{(82~182 \leq \cdot \leq 129~410)}{101~350}$                         
& $\underset{(86~751 \leq \cdot \leq 154~226)}{121~570}$               				      		\\
$\boldsymbol{\underset{{\rm (sd.)}}{{\rm Mean}}} \, \boldsymbol{\widehat \lambda_T}$   
& $\underset{(0.0006)}{0.9987}$ 
& $\underset{(0.0008)}{0.9948}$ 
& $\underset{(0.0014)}{0.9880}$  
& $\underset{(0.0016)}{0.9783}$ 		 									\\
{\bf 95\% CI}     
& $[ 0.9972 , 0.9996 ]$           
& $[ 0.9932 , 0.9963 ]$             
& $[ 0.9855 , 0.9906 ]$             
& $[ 0.9755 , 0.9824 ]$               											\\ \hline \hline
$\boldsymbol{CV_{\rho_\a} = 25\%}$                                         
& $\boldsymbol{CV_{\rho_\a} = 30\%}$                                                
& $\boldsymbol{CV_{\rho_\a} = 35\%}$                                            
& $\boldsymbol{CV_{\rho_\a} = 40\%}$                                             
& $\boldsymbol{CV_{\rho_\a} = 45\%}$                                             		\\
{\it 11.75}                                              
& {\it 12}                                                 
& {\it 12.25}                                              
& {\it 12.5}                                               
& {\it 13}                                           							    	\\
$\underset{(84~620 \leq \cdot \leq 234~613)}{129~770}$                    
& $\underset{(67~334 \leq \cdot \leq 222~004)}{135~650}$                        
& $\underset{(50~493 \leq \cdot \leq 234~646)}{141~660}$                         
& $\underset{(23~530 \leq \cdot \leq 243~023)}{140~170}$                      
& $\underset{(18~187 \leq \cdot \leq 359~824)}{154~120}$          				          	\\
$\underset{(0.0019)}{0.9665}$  
& $\underset{(0.0021)}{0.9511}$ 
& $\underset{(0.0026)}{0.9322}$  
& $\underset{(0.0038)}{0.9099}$  
& $\underset{(0.0039)}{0.8836}$  											\\
$[ 0.9634 , 0.9706 ]$             
& $[ 0.9472 , 0.9545 ]$             
& $[ 0.9263 , 0.9372 ]$             
& $[ 0.9018 , 0.9166 ]$            
& $[ 0.8743 , 0.8925 ]$         												\\ \hline \hline 
\end{tabular}

\caption{{\it {\bf Model (S+V).} Division rate $\gamma(x,v) = vxB(x)$ with $B(x) = (x-1)^2 {\bf 1}_{\{x \geq 1\}}$. Asymmetric division (a cell of size $x$ splits into two cells of size $ux$ and $(1-u)x$ for $u$ uniformly drawn on $[0.1,0.9]$). Estimation of the Malthus parameter $\lambda_{B,\rho_\a}$ (mean and 95\% confidence interval based on $M=50$ Monte Carlo continuous time trees simulated up to time $T $) with respect to the coefficient of variation of the growth rates density $\rho_\a$ with mean $\bar v = 1$.
Reference (all cells grow at a rate $\bar v=1$): $\lambda_{B,\bar v} = 1$.
} \label{4_tab:asymm}}
\end{table}
}

\newpage

\noindent {\it Robustness of our results: linear growth.}
{\small
\begin{table}[h!]
\centering
\begin{tabular}{ccccc}
\hline \hline
&$\boldsymbol{CV_{\rho_\a} = 5\%}$                                     
& $\boldsymbol{CV_{\rho_\a} = 10\%}$                                     
& $\boldsymbol{CV_{\rho_\a} = 15\%}$                                        
& $\boldsymbol{CV_{\rho_\a} = 20\%}$		 				 	\\
$\boldsymbol{T}$                   
& {\it 17.5}                                             
& {\it 18}                                               
& {\it 18.25}                                             
& {\it 18.5}                                           			 				  	\\
$\boldsymbol{\underset{{\rm (Min.}\leq\cdot\leq {\rm Max.)}}{{\rm Mean}}}\boldsymbol{|\partial \Tt_T|}$         
& $\underset{(38~138 \leq \cdot \leq 71~168)}{55~219}$                        
& $\underset{(43~802 \leq \cdot \leq 95~071)}{67~760}$                        
& $\underset{(40~296 \leq \cdot \leq 113~904)}{75~748}$                         
& $\underset{(32~035 \leq \cdot \leq 119~198)}{76~084}$               				      		\\
$\boldsymbol{\underset{{\rm (sd.)}}{{\rm Mean}}} \, \boldsymbol{\widehat \lambda_T}$           
& $\underset{(0.0014)}{0.6116}$ 
& $\underset{(0.0014)}{0.6090}$ 
& $\underset{(0.0015)}{0.6043}$  
& $\underset{(0.0018)}{0.5976}$ 		 									\\
{\bf 95\% CI}       
& $[ 0.6086 , 0.6138 ]$           
& $[ 0.6066 , 0.6115 ]$             
& $[ 0.6017 , 0.6071 ]$             
& $[ 0.5945 , 0.6010 ]$               											\\ \hline \hline

$\boldsymbol{CV_{\rho_\a} = 25\%}$                                         
& $\boldsymbol{CV_{\rho_\a} = 30\%}$                                                
& $\boldsymbol{CV_{\rho_\a} = 35\%}$                                            
& $\boldsymbol{CV_{\rho_\a} = 40\%}$                                             
& $\boldsymbol{CV_{\rho_\a} = 45\%}$                                             		\\
{\it 18.75}                                              
& {\it 19}                                                 
& {\it 19.25}                                              
& {\it 19.5}                                               
& {\it 20}                                           							    	\\
$\underset{(29~071 \leq \cdot \leq 131~343)}{73~931}$                    
& $\underset{(30~940 \leq \cdot \leq 141~046)}{76~074}$                        
& $\underset{(28~704 \leq \cdot \leq 118~295)}{69~719}$                         
& $\underset{(10~488 \leq \cdot \leq 120~506)}{57~913}$                      
& $\underset{(3~017 \leq \cdot \leq 190~355)}{62~582}$          				          	\\
$\underset{(0.0021)}{0.5893}$  
& $\underset{(0.0023)}{0.5788}$ 
& $\underset{(0.0025)}{0.5658}$  
& $\underset{(0.0033)}{0.5513}$  
& $\underset{(0.0038)}{0.5348}$  											\\
$[ 0.5838 , 0.5942 ]$             
& $[ 0.5752 , 0.5861 ]$             
& $[ 0.5607 , 0.5702 ]$             
& $[ 0.5438 , 0.5578 ]$            
& $[ 0.5270 , 0.5413 ]$         												\\ \hline \hline 
\end{tabular}

\caption{{\it {\bf Model (S+V).} Division rate $\gamma(x,v) = v B(x)$ with $B(x) = (x-1)^2 {\bf 1}_{\{x \geq 1\}}$. Estimation of the Malthus parameter $\lambda_{B,\rho_\a}$ (mean and 95\% confidence interval based on $M=50$ Monte Carlo continuous time trees simulated up to time $T$) with respect to the coefficient of variation of the growth rates density $\rho_\a$ with mean $\bar v = 1$. 
Reference (all cells grow at a rate $\bar v=1$): $\lambda_{B,\bar v} \approx 0.6130$ {\footnotesize (over 50 continuous time trees simulated up to time 17.25, sd. 0.0016). Among the 50 realisations, 95\% lie between $0.6098$ and $0.6161$. The mean-size of the 50 trees is 46~353 (the smallest tree counts $30~553$ cells and the largest $70~914$).}
} \label{4_tab:linear}}
\end{table}
}


\noindent {\it Robustness of our results: unit size versus unit time division rate.}
{\small
\begin{table}[h!]
\centering
\begin{tabular}{ccccc}
\hline \hline
&$\boldsymbol{CV_{\rho_\a} = 5\%}$                                     
& $\boldsymbol{CV_{\rho_\a} = 10\%}$                                     
& $\boldsymbol{CV_{\rho_\a} = 15\%}$                                        
& $\boldsymbol{CV_{\rho_\a} = 20\%}$		 				 	\\
$\boldsymbol{T}$                   
& {\it 10.5}                                             
& {\it 10.75}                                               
& {\it 11}                                             
& {\it 11.25}                                           			 				  	\\
$\boldsymbol{\underset{{\rm (Min.}\leq\cdot\leq {\rm Max.)}}{{\rm Mean}}}\boldsymbol{|\partial \Tt_T|}$    
& $\underset{(35~256 \leq \cdot \leq 42~659)}{39~660}$                        
& $\underset{(38~675 \leq \cdot \leq 60~374)}{49~520}$                        
& $\underset{(47~384 \leq \cdot \leq 82~371)}{61~150}$                         
& $\underset{(48~639 \leq \cdot \leq 111~048)}{79~470}$               				      		\\
$\boldsymbol{\underset{{\rm (sd.)}}{{\rm Mean}}} \, \boldsymbol{\widehat \lambda_T}$           
& $\underset{(0.0006)}{0.9993}$ 
& $\underset{(0.0013)}{0.9974}$ 
& $\underset{(0.0016)}{0.9937}$  
& $\underset{(0.0019)}{0.9893}$ 		 									\\
{\bf 95\% CI}         
& $[ 0.9982 , 1.0006 ]$           
& $[ 0.9949 , 0.9996 ]$             
& $[ 0.9894 , 0.9966 ]$             
& $[ 0.9861 , 0.9933 ]$               											\\ \hline \hline
$\boldsymbol{CV_{\rho_\a} = 25\%}$                                         
& $\boldsymbol{CV_{\rho_\a} = 30\%}$                                                
& $\boldsymbol{CV_{\rho_\a} = 35\%}$                                            
& $\boldsymbol{CV_{\rho_\a} = 40\%}$                                             
& $\boldsymbol{CV_{\rho_\a} = 45\%}$                                             		\\
{\it 11.5}                                              
& {\it 11.75}                                                 
& {\it 12}                                              
& {\it 12.25}                                               
& {\it 12.5}                                           							    	\\
$\underset{(50~665 \leq \cdot \leq 139~785)}{92~490}$                    
& $\underset{(60~083 \leq \cdot \leq 171~387)}{109~600}$                        
& $\underset{(48~810 \leq \cdot \leq 231~667)}{124~320}$                         
& $\underset{(45~032 \leq \cdot \leq 239~816)}{143~760}$                      
& $\underset{(43~934 \leq \cdot \leq 287~633)}{146~400}$          				          	\\
$\underset{(0.0022)}{0.9827}$  
& $\underset{(0.0027)}{0.9743}$ 
& $\underset{(0.0034)}{0.9644}$  
& $\underset{(0.0030)}{0.9530}$  
& $\underset{(0.0041)}{0.9400}$  											\\
$[ 0.9784 , 0.9878 ]$             
& $[ 0.9688 , 0.9794 ]$             
& $[ 0.9588 , 0.9715 ]$             
& $[ 0.9466 , 0.9590 ]$            
& $[ 0.9317 , 0.9470 ]$         												\\ \hline \hline 
\end{tabular}

\caption{{\it {\bf Model (S+V).} Division rate $\gamma(x,v) = B(x)$ with $B(x) = (x-1)^2 {\bf 1}_{\{x \geq 1\}}$. Estimation of the Malthus parameter $\lambda_{B,\rho_\a}$ (mean and 95\% confidence interval based on $M=50$ Monte Carlo continuous time trees simulated up to time $T$) with respect to the coefficient of variation of the growth rates density $\rho_\a$ with mean $\bar v = 1$.
Reference (all cells grow at a rate $\bar v=1$): $\lambda_{B,\bar v} = 1$.
} \label{4_tab:unitB}}
\end{table}
}


\noindent {\it Number of cells versus biomass.}
Recall Approximation \eqref{4_eq:convmeandelta}.
Observing $\big( (\xi_u^t , \tau_u) , u \in \partial \Tt_t \big)$, or only a component of it for all living cells, at two different times, $T/2$ and $T$ for instance, with $T$ large enough, one can estimate the Malthus parameter, with a free choice for the smooth test function~$f$. 

\begin{figure}[h!]
\begin{center}
\includegraphics[width=7cm]{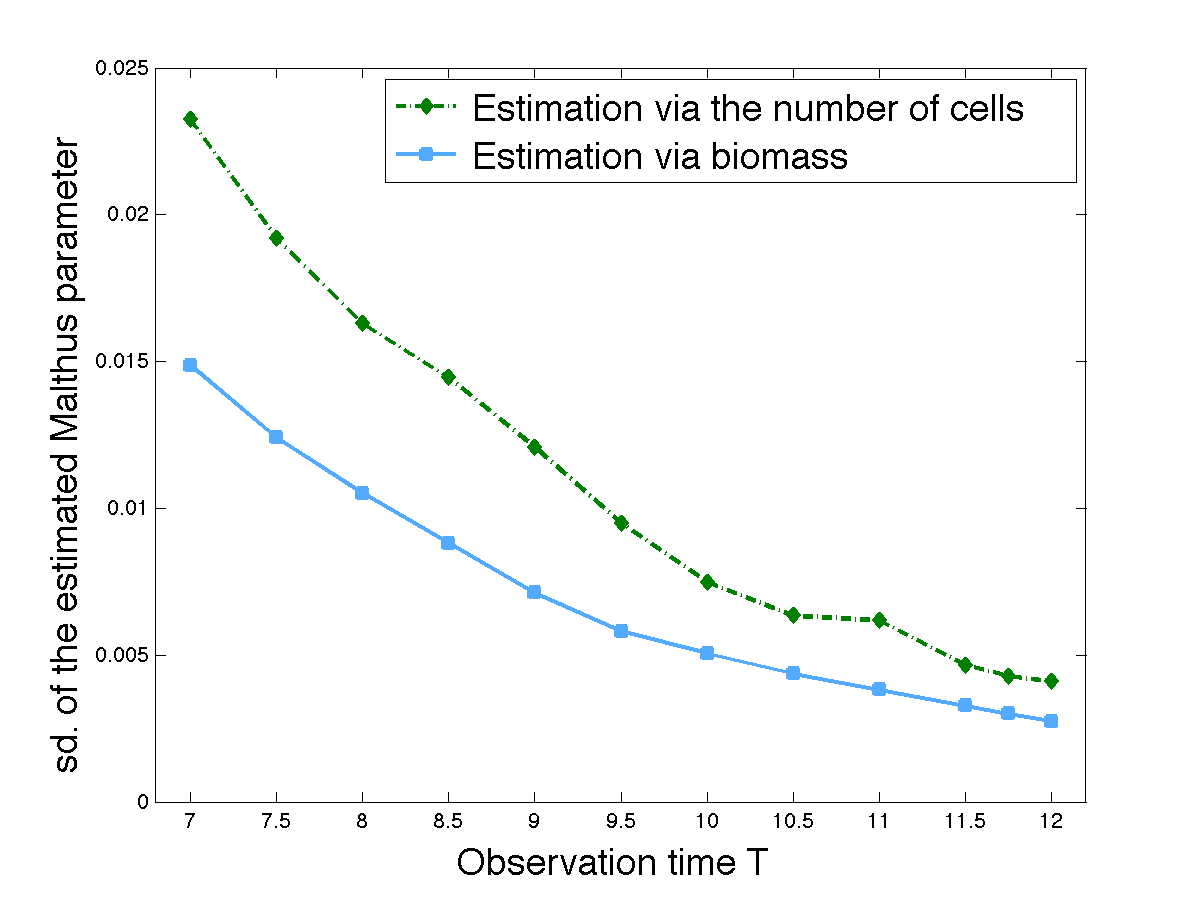}
\caption{{\it  {\bf Model (S+V).} Standard deviation of two estimators of the Malthus parameter as $T$ increases (based on $M=50$ Monte Carlo continuous time trees simulated up to time $T$), for $\rho_{\a = 0.3}$ and division rate $\gamma(x,v) = vxB(x)$ with $B(x) = (x-1)^2 {\bf 1}_{\{x\geq 1\}}$. Blue lower curve: estimation by \eqref{4_eq:lambbiomass} via the biomass. Green upper curve: estimation by \eqref{4_eq:lambnb} via the number of cells.} \label{4_fig:biomassvsnb}}
\end{center}
\end{figure}

One common choice is $f(x,v) = x$ (the empirical mean defines in this case the mean size of the cells) and it lead us to the estimator \eqref{4_eq:lambbiomass}.  
Another choice would be $f \equiv 1$ (the empirical mean defining here the mean number of cells) and it lead us to the estimator
\begin{equation} \label{4_eq:lambnb}
\widehat{\lambda}_T = \frac{2}{T} \ln\Big( \frac{|\partial \Tt_T|}{|\partial \Tt_{T/2}|} \Big)
\end{equation}
where $|\partial \Tt_T|$ stands for the cardinality of the set \eqref{4_eq:livingset} of living particles at time $T$.
It is interesting to compare these two estimators. 
For a fixed~$T$, the estimator \eqref{4_eq:lambbiomass} is better than \eqref{4_eq:lambnb} in the sense that its standard deviation is smaller, as pointed out in Supplementary Figure~\ref{4_fig:biomassvsnb}. 
However  the two estimators perform equivalently for large $T$.

\section{Appendix: Proof of Theorem~\ref{4_thm:ageeig_general}} \label{sec:proofvp}

Theorem~\ref{4_thm:ageeig_general} concerns the eigenproblem of the age-structured model with variability.
Contrary to Lemma \ref{4_lem:ageeigen}, we now work in the general case where $\rho$ is a Markov kernel.

\begin{proof}[Proof of Theorem~\ref{4_thm:ageeig_general}]
We first study the direct eigenproblem \eqref{4_eq:EDP_N}, then we turn to the adjoint eigenproblem \eqref{4_eq:EDP_Psi}. At last we prove uniqueness of the eigenelements.
The methodology we use is inspired by the one of \cite{4_Doumic07}.
In this proof we denote by  $\Cc_b(\Vv) = \Xx$ the Banach space of bounded continuous functions $f : \Vv \rightarrow \RR$, equipped with the supremum norm, $\| f \|_\Xx = \sup_{v\in\Vv} |f(v)|$, for $\Vv$ compact set of $(0,\infty)$.
\begin{proof}[Direct eigenproblem]
We split the proof into four steps.
\vip
\noindent {\it Step 1.} Since $N_{\gamma,\rho}$ satisfies \eqref{4_eq:EDP_N}, we immediately deduce that, for any $(a,v)\in \Ss$,
\begin{equation} \label{4_eq:eigenvect}
(g_aN_{\gamma,\rho})(a,v) = (g N_{\gamma,\rho})(0,v) \exp\Big( - \int_0^a \frac{\lambda_{\gamma,\rho} + \gamma(s,v)}{g_a(s,v)} ds \Big).
\end{equation}
Using the boundary condition leads us to
\begin{equation} \label{4_eq:lambdasolof}
(g_aN_{\gamma,\rho})(0,v') = 2 \iint_{\Ss} (g N_{\gamma,\rho})(0,v) e^{- \int_0^a \frac{\lambda_{\gamma,\rho}}{g_a(s,v)} ds} \Psi_{\gamma/g_a}(a,v) \rho(v,v') dv da
\end{equation}
for $v'\in\Vv$, setting
\begin{equation} \label{4_eq:fgammasurg}
\Psi_{\gamma/g_a}(a,v) = \frac{\gamma(a,v) }{g_a(a,v)}  \exp\Big( - \int_0^a \frac{\gamma(s,v)}{g_a(s,v)} ds \Big), \quad (a,v)\in\Ss.
\end{equation}
Note that, for every $v\in\Vv$, $a\leadsto \Psi_{\gamma/g_a}(a,v)$ is a density.
Equation \eqref{4_eq:lambdasolof} leads us to define an operator $\Gg_{\lambda} : \Xx \rightarrow \Xx$ by
\begin{equation*} \label{4_eq:Glambda}
(\Gg_{\lambda} f)(v') = 2 \iint_{\Ss} f(v) e^{- \int_0^a \frac{\lambda}{g_a(s,v)} ds }  \Psi_{\gamma/g_a}(a,v) \rho(v,v') dv da, \quad v' \in \Vv,
\end{equation*}
for any $\lambda \geq 0$.
In the next steps, we look for a solution $(\lambda,f)$ to the equation $\Gg_{\lambda} (f) = f$.
We work on the space $\Xx$ of continuous functions since we want to apply the Krein-Rutman theorem  \cite{4_DautrayLions} 
(then the interior of the positive cone is the set of positive functions). 
\vip
\noindent {\it Step 2.} 

We introduce a so-called regularized operator, which is strictly positive.
For a fixed $\ep > 0$, set
\begin{equation} \label{4_eq:rhoep}
\rho_\ep(v,v') = \rho(v,v') + |\Vv|^{-1} \ep , \quad (v,v')\in \Vv^2,
\end{equation}
where $|\Vv|$ stands for the Lebesgue measure of the compact set $\Vv\subset (0,\infty)$ and define the operator $\Gg_{\lambda,\ep} : \Xx \rightarrow \Xx$ by
\begin{equation*} \label{4_eq:Glambda}
(\Gg_{\lambda,\ep} f)(v') = 2 \iint_{\Ss} f(v) \exp\Big( - \int_0^a \frac{\lambda}{g_a(s,v)} ds\Big)  \Psi_{\gamma/g_a}(a,v) \rho_\ep(v,v') dv da, \quad v' \in \Vv,
\end{equation*}
for any $\lambda \geq 0$.
We claim that $\Gg_{\lambda,\ep}$ is {\bf 1)} strictly positive on $\Xx$ (\textit{i.e.} for any $f\in \Xx$ non-negative and different from the null function, $(\Gg_{\lambda,\ep}f)(v')>0$ for any $v' \in\Vv$), {\bf 2)} a linear mapping from $\Xx$ into itself, {\bf 3)} continuous and {\bf 4)}  compact.
Thus we are now in position to apply the Krein-Rutman theorem (we use Theorem 6.5 of \cite{4_Perthame}). For any $\lambda\geq0$ there exist a unique $\mu_{\lambda,\ep}>0$ and a unique positive $U_{\lambda,\ep}\in \Xx$ such that
\begin{equation} \label{4_eq:mulambdaep}
\Gg_{\lambda,\ep} (U_{\lambda,\ep}) = \mu_{\lambda,\ep} U_{\lambda,\ep}
\end{equation}
and $\|U_{\lambda,\ep}\|_\Xx = 1$.
\vip
It just remains to prove the four claimed properties. {\bf 1)} is precisely achieved thanks to the regularisation $\rho_\ep$ of $\rho$ by \eqref{4_eq:rhoep}. {\bf 2)} The linearity is obvious and for $f\in \Xx$, we have $\Gg_{\lambda,\ep} (f) \in \Xx$ since $v' \leadsto \rho_\ep(v,v')$ is continuous and bounded for any $v\in \Vv$. {\bf 3)} We even achieve Lipschitz continuity, for any $(f,g)\in\Xx^2$,
$$
\|\Gg_{\lambda,\ep} (f)-\Gg_{\lambda,\ep} (g)\|_\Xx \leq 2 (|\Vv| |\rho|_\infty +\ep) \|f-g\|_\Xx
$$
where $|\rho|_\infty = \sup_{(v,v')\in\Vv^2} \rho(v,v')$.
 {\bf 4)}
 We prove that for any $\lambda\geq 0$ the family $\big( \Gg_{\lambda,\ep} (f) , f\in \Xx \big)$ is equicontinuous. 
Indeed $(\Gg_{\lambda,\ep}f)(v'_1) - (\Gg_{\lambda,\ep} f)(v'_2)$ is arbitrarily small when $|v'_1-v'_2|$ is small enough, uniformly in $f\in\Xx$ such that $\|f\|_\Xx \leq 1$, since $\rho_\ep$, $\gamma$ and $g_a$ are uniformly continuous.
Therefore by the Ascoli-Arzel\`a theorem for any $\lambda\geq 0$ the family $\big( \Gg_{\lambda,\ep} (f) , f\in \Xx \big)$ is  compact in $\Xx$. 

\vip
\noindent {\it Step 3.} We now study the mapping $\lambda \leadsto \mu_{\lambda,\ep}$. Our aim is to prove that there exists $\lambda_\ep >0$ such that
\begin{equation} \label{4_eq:mulambdaep1}
\mu_{\lambda_\ep,\ep} = 1.
\end{equation}
To prove so we successively verify that 
{\bf 1)} as $\lambda$ increases, $\mu_{\lambda,\ep}$ does not increase,
{\bf 2)} the mapping $\lambda \leadsto \mu_{\lambda,\ep}$ is continuous,
{\bf 3)} for $\lambda = 0$, $\mu_{\lambda = 0,\ep} = 2(1+\ep)>1$,
{\bf 4)} as $\lambda \rightarrow \infty$, $\mu_{\lambda,\ep}$ converges to zero.
\vip
To prove {\bf 1)}, since $\mu_{\lambda,\ep}$ is the spectral radius of $\Gg_{\lambda,\ep}$, by the Gelfand-Beurling formula, it holds 
\begin{equation} \label{4_eq:firstvp}
\mu_{\lambda,\ep} = \lim_{n\rightarrow \infty} \interleave \Gg_{\lambda,\ep}^{n}  \interleave^{1/n}
\end{equation}
where
$$ \interleave \Gg_{\lambda,\ep}^{n}  \interleave = \sup_{f \in \Xx, \, \|f\|_\Xx = 1} \|\Gg_{\lambda,\ep}^nf \|_{\Xx} = \sup_{f \in \Xx, \, f\geq0, \, \|f\|_\Xx = 1} \|\Gg_{\lambda,\ep}^nf \|_{\Xx}.$$
Note that $\Gg_{\lambda,\ep}$ itself decreases as $\lambda$ increases: if $\lambda_2 > \lambda_1$ then $\Gg_{\lambda_2,\ep}(f) < \Gg_{\lambda_1,\ep}(f)$ for any nonnegative $f\in\Xx$. Also note that, for two nonnegative functions $f\in \mathcal X$ and $g\in \mathcal X$, if $f<g$ then $\Gg_{\lambda,\ep}(f) < \Gg_{\lambda,\ep}(g)$ for any $\lambda\geq0$. Relying on the two previous facts, one easily checks that if $\lambda_2 > \lambda_1$ then $\Gg_{\lambda_2,\ep}^n(f) < \Gg_{\lambda_1,\ep}^n(f)$ for any nonnegative $f\in\Xx$ and any integer $n$. Using \eqref{4_eq:firstvp}, we deduce $\mu_{\lambda_1,\ep} \leq \mu_{\lambda_2,\ep}$.
{\bf 2)} Let $(\lambda_k)_{k\geq 0}$ be a sequence such that $\lambda_k\geq0$ converges to $\bar \lambda \geq 0$ as $k\rightarrow \infty$. 
We know there exists a unique positive $U_{\lambda_k,\ep} \in \Xx$ satisfying \eqref{4_eq:mulambdaep} such that $\|U_{\lambda_k,\ep} \|_{\Xx}=1$. 
Also, there exists a unique positive $U_{\bar \lambda,\ep} \in \Xx$ satisfying \eqref{4_eq:mulambdaep} such that $\|U_{\bar \lambda,\ep} \|_{\Xx}=1$. 
Since $ \Gg_{\lambda_k,\ep} U_{\bar \lambda,\ep} = \mu_{\bar \lambda,\ep} U_{\bar \lambda,\ep} + (\Gg_{\lambda_k,\ep} - \Gg_{\bar \lambda,\ep})U_{\bar \lambda,\ep}$ 
and since $\|(\Gg_{\lambda_k,\ep} - \Gg_{\bar \lambda,\ep})U_{\bar \lambda,\ep}\|_{\Xx} \leq \eta_k \|U_{\bar \lambda,\ep} \|_{\Xx}$ for some small $\eta_k$ (recall that $\lambda_k\rightarrow\bar \lambda$), 
we deduce that $\mu_{\bar \lambda,\ep} U_{\bar \lambda,\ep}  -\eta_k \leq \Gg_{\lambda_k,\ep} U_{\bar \lambda,\ep} \leq \mu_{\bar \lambda,\ep} U_{\bar \lambda,\ep}  + \eta_k$. 
Now we have $0< \min U_{\bar \lambda,\ep} \leq U_{\bar \lambda,\ep} \leq 1$. 
Thus 
$$
\big( \mu_{\bar \lambda,\ep} -\eta_k\big) U_{\bar \lambda,\ep}   \leq \Gg_{\lambda_k,\ep} U_{\bar \lambda,\ep} \leq \big( \mu_{\bar \lambda,\ep}  + \tfrac{\eta_k}{\min U_{\bar \lambda,\ep} } \big) U_{\bar \lambda,\ep}.
$$
It follows that $$ \mu_{\bar \lambda,\ep} -\eta_k  \leq \mu_{\lambda_k,\ep} \leq  \mu_{\bar \lambda,\ep}  + \tfrac{\eta_k}{\min U_{\bar \lambda,\ep}}.$$ 
As $k\rightarrow\infty$, $\eta_k$ goes to zero and thus $ \mu_{\bar \lambda_k,\ep}\rightarrow \mu_{\bar \lambda,\ep}$, which proves the continuity.
To prove {\bf 3)}, we successively compute,
$$
(\Gg_{\lambda = 0,\ep} f)(v') = 2 \iint_{\Ss} f(v) \Psi_{\gamma/g_a}(a,v) \rho_\ep(v,v') dv da = 2 \int_{\Vv} f(v) \rho_\ep(v,v') dv
$$
since $\int_0^\infty  \Psi_{\gamma/g_a}(a,v) da = 1$ for any $v\in \Vv$, and
$$
\int_\Vv (\Gg_{\lambda = 0,\ep} f)(v')  dv' = 2(1+\ep) \int_{\Vv} f(v) dv
$$
since $\int_\Vv \rho_\ep(v,v') dv' = 1+\ep$ for any $v\in\Vv$.
Thus, by \eqref{4_eq:mulambdaep} and choosing $f=U_{\lambda,\ep}$ in the previous calculus,
$$
\int_{\Vv} (\Gg_{\lambda = 0,\ep} U_{\lambda = 0,\ep})(v') dv' = \mu_{\lambda = 0,\ep} \int_\Vv U_{\lambda = 0,\ep}(v') dv' = 2(1+\ep) \int_\Vv U_{\lambda = 0,\ep}(v) dv.
$$
Since $\int_\Vv U_{\lambda = 0,\ep}(v) dv < \infty$ ($U_{\lambda = 0,\ep}$ being bounded and $\Vv$ being a compact set), we deduce that $\mu_{\lambda  = 0} =2(1+\ep)$.
To prove {\bf 4)},
\begin{align*}
\int_\Vv (\Gg_{\lambda,\ep}f)(v') dv' & =  2(1+\ep)\iint_\Ss f(v) \exp\Big( - \int_0^a \frac{\lambda}{g_a(s,v)} ds\Big)  \Psi_{\gamma/g_a}(a,v) dvda \\
& \leq 2(1+\ep) \frac{ |g_a|_\infty |\Psi_{\gamma/g_a}|_\infty}{\lambda}  \int_\Vv f(v) dv
\end{align*}
using again $\int_\Vv \rho_\ep(v,v')dv' = 1+\ep$ and Assumption~\ref{4_ass:age_basic} for the upper bound. 
Then, as previously, by \eqref{4_eq:mulambdaep} and taking $f=U_{\lambda,\ep}$ in the previous calculus, we check that
\begin{equation} \label{4_eq:muupperbound}
\mu_{\lambda,\ep} \leq 2(1+\ep) \frac{|g_a|_\infty |\Psi_{\gamma/g_a}|_\infty}{\lambda} ,
\end{equation}
which implies that $\mu_{\lambda,\ep}\rightarrow 0$ as $\lambda \rightarrow \infty$.

\vip
\noindent {\it Step 4.} 
In this last step, the aim is to let $\ep$ tend to zero. Let a $\lambda_\ep$ be defined by \eqref{4_eq:mulambdaep1} and denote by $U_{\lambda_\ep,\ep} = U _\ep\in \Xx$ the associated positive eigenvector such that $\| U_\ep \|_\Xx=1$. 
On the one hand, the family $(\lambda_\ep, 0<\ep<1)$ is bounded, recalling \eqref{4_eq:mulambdaep1} and \eqref{4_eq:muupperbound}. 
On the other hand, the family, $(U_\ep , 0<\ep<1)$ is compact in $\Xx$ (recall that $U_\ep = \Gg_{\lambda_\ep,\ep} U_\ep$ and use again the Ascoli-Arzel\`a theorem -- note that we achieve uniformity in $\varepsilon \in (0,1)$ using the fact that $(\lambda_\ep, 0<\ep<1)$ is bounded). 
Thus we can extract a subsequence, still denoted by $(\lambda_\ep,U_\ep)$, converging to $(\bar \lambda, \bar U)$ in $\RR\times \Xx$ with $\bar \lambda \geq0$ and $\bar U \in\Xx$ positive such that $\|\bar U \|_\Xx=1$. Since
$$
U_\ep(v') = 2 \iint_{\Ss} U_\ep(v) \exp\Big( - \int_0^a \frac{\lambda_\ep}{g_a(s,v)} ds\Big)  \Psi_{\gamma/g_a}(a,v) (\rho(v,v') + |\Vv|^{-1}\ep) dv da,
$$
letting $\ep\rightarrow0$, by the dominated convergence theorem, we obtain $\bar U = \Gg_{\bar \lambda} \bar U$, which means that we have found a solution to \eqref{4_eq:lambdasolof}.
Now set $\lambda_{\gamma,\rho} = \bar \lambda$ and for any $(a,v)\in\Ss$,
$$
N_{\gamma,\rho} (a,v)= \frac{\kappa \bar U(v)}{g_a(a,v)} \exp\Big( - \int_0^a \frac{\bar \lambda + \gamma(s,v)}{g_a(s,v)} ds \Big),
$$
reminding \eqref{4_eq:eigenvect} and $U_{\gamma,\rho} = g N_{\gamma,\rho}$, with $\kappa>0$ chosen such that $\iint_\Ss N_{\gamma,\rho} = 1$ (which is possible since $\iint_\Ss N_{\gamma,\rho}  < \infty$).
\end{proof}

\begin{proof}[Adjoint eigenproblem] The proof follows the same steps as in the direct eigenproblem.
\vip
\noindent {\it Step 1.} Since $\phi_{\gamma,\rho}$ satisfies \eqref{4_eq:EDP_Psi}, one easily checks that
$$
\frac{\p}{\p a} \Big( \phi_{\gamma,\rho}(a,v) e^{-\int_0^a \frac{\lambda_{\gamma,\rho}+\gamma(s,v)}{g_a(s,v)} ds}\Big) = -2 \Psi_{\gamma/g_a}(a,v) e^{-\int_0^a \frac{\lambda_{\gamma,\rho}}{g_a(s,v)} ds}  \int_\Vv \phi_{\gamma,\rho}(0,v') \rho(v,v') dv' 
$$
with $\Psi_{\gamma/g_a}$ defined by \eqref{4_eq:fgammasurg}. Integrating in $a$ between zero and infinity, we deduce
\begin{equation} \label{4_eq:lambdasolof2} 
\phi_{\gamma,\rho}(0,v) = 2\int_0^\infty \Psi_{\gamma/g_a}(a,v) e^{-\int_0^a \frac{\lambda_{\gamma,\rho}}{g_a(s,v)} ds} da  \int_\Vv \phi_{\gamma,\rho}(0,v') \rho(v,v') dv' 
\end{equation}
and integrating between zero and $a$, we deduce
\begin{equation}  \label{4_eq:eigenvect2}
\phi_{\gamma,\rho}(a,v) = 2 e^{\int_0^a \frac{\lambda_{\gamma,\rho}+\gamma(s,v)}{g_a(s,v)} ds}  \int_a^\infty \Psi_{\gamma/g_a}(s,v) e^{-\int_0^s \frac{\lambda_{\gamma,\rho}}{g_a(t,v)} dt} ds  \int_\Vv \phi_{\gamma,\rho}(0,v') \rho(v,v') dv'.
\end{equation}
\begin{rk} \label{4_eq:phistrictpositif}
By a \emph{reductio ad absurdum} argument, using \eqref{4_eq:lambdasolof2} and the Markov kernel properties, we prove that for any $v\in\Vv$, $\phi_{\gamma,\rho}(0,v) >0$. Then, using \eqref{4_eq:eigenvect2}, we deduce that $\phi_{\gamma,\rho}(a,v)>0$ for any $(a,v) \in \Ss$.
\end{rk}
Equation \eqref{4_eq:lambdasolof2} leads us to define an operator $\Gg^*_{\lambda} : \Xx \rightarrow \Xx$ by
\begin{equation*} \label{4_eq:Glambda}
(\Gg^*_{\lambda} f)(v) = 2\int_0^\infty \Psi_{\gamma/g_a}(a,v) e^{-\int_0^a \frac{\lambda}{g_a(s,v)} ds} da  \int_\Vv f(v') \rho(v,v') dv' , \quad v \in \Vv,
\end{equation*}
for any $\lambda\geq0$. The aim is now to find a solution $(\lambda,f)$ to the equation $\Gg^*_{\lambda} (f) = f$.
\vip
\noindent {\it Step 2.} For a fixed $\ep>0$, we define a regularized operator $\Gg^*_{\lambda,\ep} : \Xx \rightarrow \Xx$ by
\begin{equation*} \label{4_eq:Glambda}
(\Gg^*_{\lambda,\ep} f)(v) = 2\int_0^\infty \Psi_{\gamma/g_a}(a,v) e^{-\int_0^a \frac{\lambda}{g_a(s,v)} ds} da  \int_\Vv f(v') \rho_\ep(v,v') dv' , \quad v \in \Vv,
\end{equation*}
with $\rho_\ep$ picked as in \eqref{4_eq:rhoep}, for any $\lambda \geq 0$. With similar arguments as previously, we prove we are in position to apply the Krein-Rutman theorem: for any $\lambda\geq0$ there exist a unique $\mu_{\lambda,\ep}>0$ and a unique positive $H_{\lambda,\ep}\in \Xx$ such that 
\begin{equation} \label{4_eq:mulambdaep2}
\Gg_{\lambda,\ep}^*(H_{\lambda,\ep}) = \mu_{\lambda,\ep} H_{\lambda,\ep}
\end{equation}
and $\|H_{\lambda,\ep}\|_\Xx = 1$.

\vip
\noindent {\it Step 3.} The study of $\lambda \leadsto \mu_{\lambda,\ep}$ consists in proving the same four points as previously. Only the verification of {\bf 3)} and {\bf 4)} slightly differs. To prove {\bf 3)}, we successively compute
$$
(\Gg_{\lambda = 0,\ep}^* f)(v) = 2 \int_\Vv f(v') \rho_\ep(v,v') dv' 
$$
and 
$$
\int_\Vv (\Gg_{\lambda = 0,\ep}^* f)(v)  dv = 2 \int_\Vv f(v') \Big(\int_\Vv \rho(v,v') dv +\ep \Big) dv'  \geq 2 (\varrho+\ep) \int_\Vv f(v') dv',
$$
relying on Assumption~\ref{4_ass:age_rho}.
Thus, choosing $f=H_{\lambda,\ep}$ in the previous calculus and using \eqref{4_eq:mulambdaep2}, we obtain $\mu_{\lambda = 0, \ep} \geq 2(\varrho+\ep)>1$ as soon as $\varrho>1/2$. To prove {\bf 4)}, we readily obtain that 
$$
\mu_{\lambda,\ep} \leq 2(|\Vv| |\rho|_\infty+\ep) \frac{|g_a|_\infty |\Psi_{\gamma/g_a}|_\infty}{\lambda} \rightarrow 0
$$
as $\lambda \rightarrow 0$.
\vip
\noindent {\it Step 4.}  We let $\ep$ go to zero as previously and we find $(\bar \lambda^*,\bar H)$ with $\bar \lambda^* \geq 0$ and $\bar H$ non-negative, $\|\phi^*\|_\Xx=1$, such that $\Gg_{\bar \lambda^*}^* \bar H = \bar H$, which means we have found a solution to \eqref{4_eq:lambdasolof2}. Recalling \eqref{4_eq:eigenvect2}, we set
$$
\phi_{\gamma,\rho}(a,v) = 2 \kappa' e^{\int_0^a \frac{\bar \lambda^*+\gamma(s,v)}{g_a(s,v)} ds}  \int_a^\infty \Psi_{\gamma/g_a}(s,v) e^{-\int_0^s \frac{\bar \lambda^*}{g_a(t,v)} dt} ds  \int_\Vv \bar H(v') \rho(v,v') dv'.
$$
and fix $\kappa'>0$ such that $\iint_\Ss N_{\gamma,\rho} \phi_{\gamma,\rho} = 1$.
\end{proof}

\begin{proof}[Uniqueness of the eigenelements] We successively prove the uniqueness of the eigenvalue, of the direct eigenvector and of the adjoint eigenvector.
\vip
\noindent {\it Step 1.} Let $(\lambda,N)$ be a solution to the direct eigenproblem \eqref{4_eq:EDP_N} and $(\lambda^*,\phi)$ be a solution to the adjoint eigenproblem \eqref{4_eq:EDP_Psi}. We first prove that $\lambda = \lambda^*$. Indeed,
\begin{align*}
\lambda \iint_\Ss N \phi & = \iint_\Ss \Big( - \frac{\p}{\p a} (g_aN)(a,v) - \gamma(a,v) N(a,v) \Big) \phi(a,v) dvda \\
& =  \iint_\Ss N(a,v) \Big( g_a(a,v)\frac{\p}{\p a} \phi(a,v) + \gamma(a,v) \big( 2  \int_{\Vv} \phi(0,v') \rho(v,v') dv' - \phi(a,v) \big) \Big) dvda \\
& = \lambda^* \iint_\Ss N \phi
\end{align*}
and since $ \iint_\Ss N \phi > 0$ we deduce $\lambda= \lambda^*$.
\vip
\noindent {\it Step 2.} Let $(\lambda,N_1)$ and $(\lambda,N_2)$ be two solutions of the direct eigenproblem \eqref{4_eq:EDP_N}. We prove that $N_1 = N_2$. Following the proof of Proposition 6.3 of \cite{4_Perthame}, we prove that $\widetilde N = |N_1 - N_2|$ satisfies
$$
\iint_\Ss \Big( \frac{\p}{\p a} (g_a\widetilde N)(a,v) + \gamma(a,v) \widetilde N(a,v) \Big) \phi(a,v) dvda = 0
$$ 
taking $\phi$ a solution to \eqref{4_eq:EDP_Psi} as a test function.
We deduce that
\begin{multline*}
2 \int_\Vv \Big( \iint_\Ss \gamma(a,v) \big|N_1-N_2\big|(a,v) \rho(v,v') dvda \Big) \phi(0,v') dv' \\= 2 \int_\Vv \Big| \iint_\Ss \gamma(a,v) (N_1-N_2)(a,v) \rho(v,v') dvda \Big| \phi(0,v') dv',
\end{multline*}
using that $\phi$ is a solution to \eqref{4_eq:EDP_Psi} and since both $N_1$ and $N_2$ satisfy the boundary condition of the eigenproblem \eqref{4_eq:EDP_N}.
Thanks to the fact that $\phi(0,v')>0$ for $v' \in \Vv$, we deduce that $\gamma(a,v) (N_1-N_2)(a,v) \rho(v,v')$ is of constant sign. Then, integrating in $v'$, $\gamma(a,v) (N_1-N_2)(a,v)$ is also of constant sign. Recall that for each rate $v\in \Vv$, the division rate $\gamma(a,v)$ is positive for $a$ belonging to some $[a_{\min}(v),a_{\max}(v)]$, thus $(N_1-N_2)(a,v)$ is of constant sign on $\{[a_{\min}(v),a_{\max}(v)]\times\{v\},v\in\Vv\}$. Using \eqref{4_eq:eigenvect}, we deduce that $(N_1-N_2)(0,v)$ is of constant sign on $\Vv$ and thus, using \eqref{4_eq:eigenvect} again, $(N_1-N_2)(a,v)$ is of constant sign on $\Ss$.
Since we have $\iint_\Ss (N_1-N_2) = 0$, the conclusion $N_1 = N_2$ follows.
To conclude,
Fredholm alternative (see \cite{4_Brezis}) ensures that uniqueness of a solution to \eqref{4_eq:EDP_N} implies uniqueness of a solution to \eqref{4_eq:EDP_Psi}.
\end{proof}
The proof of Theorem~\ref{4_thm:ageeig_general} is now complete.
\end{proof}

\section{Appendix: Supplementary result on the age-structured model} \label{sec:supp}

In this section, working under {\bf Model (A+V)}, our aim is to compare the growth speed of the two populations defined on page~\pageref{page:pop} when preserving the mean lifetime (instead of the mean aging rate as previously). In other words, our aim is to compare $\lambda_{B,\bar v}$ to $\lambda_{B,\rho}$ for a density $\rho$ on $\Vv$ such that 
\begin{equation} \label{eq:preserve}
\int_{\Vv} \frac{1}{v} \rho(v) dv = \frac{1}{\bar v}.
\end{equation}

In order to check  this fact, let us consider the stochastic description of {\bf Model (A+V)}. Let $\tau_u$ be the aging rate of a cell $u$ such that $\mathbb P(\tau_u \in dv) = \rho(v)dv.$
 The physiological age of cell $u$ is $\Delta_u = \tau_u \zeta_u$ with $\zeta_u$ its lifetime such that 
 $$
 \mathbb P \big(\zeta_u \in [t, t+dt) | \zeta_u \geq t, \tau_u = v \big) = \gamma(vt,v) dt = vB(vt) dt,
 $$ 
 since $vt = a$ is the physiological age and since $\gamma(a,v) = vB(a)$. 
 In addition all cells are independent.
One can check that the density of the physiological age $\Delta_u = \tau_u \zeta_u $ is 
 $$
  \mathbb P (\Delta_u \in da) = B(a) \exp(-\int_0^a B(s) ds) da = \Psi_B(a),
 $$
 which means that $\Delta_u$ is independent of $\tau_u$.
As a consequence
 $$
 \mathbb E[\zeta_u]  = \mathbb E\bigg[\frac{\Delta_u}{\tau_u}\bigg] =   \mathbb E[\Delta_u] \times  \mathbb E\bigg[\frac{1}{\tau_u}\bigg] = \int_{0}^\infty a \Psi_B(a) da \times \int_{\mathcal V} \frac{1}{v} \rho(v) dv.
$$
On the other hand, when there is no variability, the mean lifetime is $$\int_{0}^\infty a \Psi_B(a) da \times \frac{1}{\bar v}.$$

Note that the influence of the division rate $B$ and the influence of the aging rates distribution on the mean lifetime are independent.
Thus, for a fixed $B$, preserving the mean lifetime of the cells of the two populations is indeed equivalent to \eqref{eq:preserve}.\\

\begin{thm} \label{4_thm:ageinfluencebis}
Consider {\bf Model (A+V)} with Specifications \eqref{4_eq:gv}, \eqref{4_eq:vBa}, \eqref{4_eq:noheredity} and \eqref{eq:preserve}. Then 
$$
\lambda_{B,\rho} > \lambda_{B,\bar v}
$$
for any division rate $B$.
\end{thm}

\begin{proof}
One wants to compare  {\bf 1)} the Malthus parameters $\lambda_{B,\bar v}$ solution to  \eqref{4_eq:age_lambda_novar} ; to {\bf 2)} the Malthus parameters $\lambda_{B,\rho}$ solution to  \eqref{4_eq:age_lambda} which is equivalent to
\begin{equation*} \label{4_eq:age_lambdabis}
2 \iint_{\tilde \Ss} \exp \big( - \lambda_{B,\rho} a \tilde v \big) \Psi_B(a) \tilde \rho(\tilde v) d\tilde vda = 1 \,\, ;
\end{equation*}
setting $\tilde \Ss = (0,\infty)\times 1/\Vv$ and $\tilde \rho(\tilde v) = \rho(1/\tilde v)/\tilde v^2$. 

One can easily check that $\tilde \rho$ is of mean $1/\bar v$. Then the comparison of $\lambda_{B,\bar v}$  to $\lambda_{B,\rho}$ immediately follows using Jensen's inequality following the same method as in the proof of Theorem \ref{4_thm:ageinfluence}.
\end{proof}

Note that one can also obtain an analog of Theorem \ref{4_thm:ageperturbation} and this is left to the reader.
The conclusions of Theorem \ref{4_thm:ageinfluence} and \ref{4_thm:ageinfluencebis} differ since the preserved quantities between the two populations are not the same in the two cases (mean aging rate {\it vs.} mean lifetime). Both are mathe\-matically interesting, even if, in view of applications concerning  the bacteria {\it E. coli}, {\bf Model~(S+V)} would be useful. 

\vip
\vip

\noindent {\bf Acknowledgements.} I thank L. Robert for suggesting this very interesting biological problem and M. Doumic for many helpful discussions.  I am also grateful to P. Reynaud-Bouret for a careful reading and to the referee for suggestions in order to improve the manuscript.

\end{document}